\documentclass{amsart}

\usepackage{amsmath,amssymb,verbatim,hyperref,mathrsfs,graphicx}

 %mean value of the game

\newcommand{\T}{\tau} % cout temps T 
\newcommand{\meanv}{\rho} %mean value of the game

%strategies
\newcommand{\balpha}{\sigma}
\newcommand{\bbeta}{\delta}
\newcommand{\salpha}{\bar \sigma}
\newcommand{\sbeta}{\bar \delta}

%processes
\newcommand{\Xk}{\xi}    % proc des etats
\newcommand{\Ak}{\alpha} % proc a
\newcommand{\Bk}{\beta} % proc b
\newcommand{\Ik}{\zeta} % proc history
\newcommand{\sx}{i}   % state x, i
\newcommand{\sy}{j}   % state y, j

%\textheight=18cm
 %stationnary probility of MC
\newcommand{\valx}{v} %generic value replace x since x is a state
\newcommand{\valy}{w} %generic value replace y
 %generic value replace z

\newcommand{\new}[1]{{\em #1}\index{#1}}

\newcommand{\R}{\mathbb{R}}
\newcommand{\inte}[1]{\overset{\;_\circ}{#1}}

  % projection spectral

\newcommand{\fvi}[3]{#1(#2;#3)}     % f de recession
\newcommand{\fvia}[4]{#1(#2;#3,#4)}     % f de recession
\newcommand{\fviab}[5]{#1(#2;#3,#4,#5)}     % f de recession

     % f de recession
     % f de recession
     % f de recession

  % tangent de f en x at infinity
  % tangent de f en x
  % tangent de f en x
  % tangent de f en x

        % ||A||

\newcommand{\set}[2]{\{#1\mid\,#2\}}

 % {(G^{\rm f})}

\newcommand{\sE}{\mathbb{E}}       %expectation

\def\scaled#1#2{{\mathcal S}_{#1}(#2)}
\def\linear#1#2{{\mathcal L}_{#1}(#2)}

%for discrete game section:

%sets
\newcommand{\X}{{[n]}}
\newcommand{\RX}{\R^n} % au lieu de \R^\X
\newcommand{\A}{A}
\newcommand{\B}{B}
\newcommand{\Ag}{A}
\newcommand{\Bg}{B}
\newcommand{\Am}{{\A}_{\mathrm M}}
\newcommand{\Bm}{{\B}_{\mathrm M}}
\newcommand{\PA}{\Am}
\newcommand{\PB}{\Bm}
\newcommand{\SA}{\mathsf{SA}}

\newcommand{\ki}{{s}} 
\newcommand{\kj}{{l}} 

\newcommand{\proc}[1]{\left( {#1} \right)_{k\ge 0}}              %  (A)_k>=0
        % ||A||

   % max avec espaces
   % min avec espaces

\newcommand{\MIN}{\text{\sc min}}
\newcommand{\MAX}{\text{\sc max}}

\newcommand{\argmin}[1]{\underset{#1}{\operatorname{argmin}}}

\newcommand{\eigenrad}[1]{r(#1)}
\newcommand{\bonsall}[1]{r_{#1}}
\newcommand{\eigenvalspr}[1]{\hat{r}_{#1}}
\newcommand{\cw}[1]{\operatorname{cw}_{#1}}

\newcommand{\graph}{\mathcal{G}}
\newcommand{\TC}{{\mathcal T}}
\newcommand{\rs}{c} % renewal state
\newcommand{\MC}{\mathcal{M}}
\newcommand{\rgrad}[1]{\operatorname{imD}(#1)}
\newcommand{\rec}[1]{\operatorname{rec}(#1)}

\DeclareMathAlphabet{\mathbbold}{U}{bbold}{m}{n}
\newcommand{\un}{\mathbbold{1}}

\theoremstyle{plain}
\newtheorem{theorem}{Theorem}
\newtheorem{coro}[theorem]{Corollary}
\newtheorem{lemma}[theorem]{Lemma}
\newtheorem{prop}[theorem]{Proposition}

\theoremstyle{definition}
\newtheorem{algo}{Algorithm}
\theoremstyle{remark}

\def\pf{\begin{proof}}
\def\endpf{\end{proof}}
\catcode`\@=11
\def\refcounter#1{\protected@edef\@currentlabel
       {\csname the#1\endcsname}
}
\catcode`\@=12
\newcounter{assume}
\def\theassume{{\rm (A\arabic{assume})}}
\def\myitem{\refstepcounter{assume}\item[]\theassume}
\newenvironment{assumption}{\begin{itemize}\myitem}{\end{itemize}}

\title[Mean payoff games with bounded first return times]{Policy iteration for perfect information stochastic mean payoff games with bounded first return times is strongly polynomial}

\author{Marianne Akian}
\address{Marianne Akian,
INRIA Saclay--\^Ile-de-France and CMAP, \'Ecole 
Polytechnique. Address:
CMAP, \'Ecole Polytechnique,
Route de Saclay,
91128 Palaiseau Cedex, France.}
\email{Marianne.Akian@inria.fr}
\author{St\'ephane Gaubert}
\address{St\'ephane Gaubert,
INRIA Saclay--\^Ile-de-France and CMAP, \'Ecole 
Polytechnique. Address: CMAP, \'Ecole Polytechnique,
Route de Saclay,
91128 Palaiseau Cedex, France.}
\email{Stephane.Gaubert@inria.fr}

\keywords{Stochastic games, policy iteration, mean return time, Doeblin state, cone spectral radius}

\begin{document}

\begin{abstract}
Recent results of Ye and Hansen, Miltersen and Zwick show that 
policy iteration for one or two player (perfect information)
zero-sum stochastic games,
restricted to instances with a fixed
discount rate, is strongly polynomial.
We show that policy iteration for
mean-payoff zero-sum stochastic games is also strongly 
polynomial when restricted to instances with bounded first 
mean return time to a given state.
The proof is based on methods of nonlinear Perron-Frobenius 
theory, allowing us to reduce the mean-payoff problem to
a discounted problem with state dependent discount rate.
Our analysis also shows that policy iteration remains strongly 
polynomial for discounted problems in which the discount rate can be
state dependent (and even negative) at certain states, provided that 
 the spectral radii of the nonnegative matrices associated to 
all strategies are  
bounded from above by a fixed constant strictly less than $1$.
\end{abstract}

\maketitle

\section{Introduction}
\subparagraph{Motivation and earlier works}
Policy iteration algorithm is a classical algorithm
to solve discounted Markov decision problems (one player games)
with finite state and actions spaces. A policy is a map
from the set of states to the set of actions, representing
a Markovian decision rule. The algorithm constructs
a sequence of policies such that the associated sequence of values
is strictly decreasing (assuming that the player minimizes her cost function).
Hence, its number of iteration is bounded by the number of policies. 
The method carries over to discounted zero-sum games with
perfect information, still with finite state and action spaces. 
It now makes external iterations in the space of policies of the first player,
and at each step, solves an auxiliary Markov decision problem,
making then internal iterations in the space of policies of the second player.
Again, the first player never selects twice the same policy,
which entails that the algorithm does terminate in a time
which is bounded by the product of the numbers of policies
of both players. This yields an exponential bound
on the execution time, as the number of policies
of one player can be exponential 
in the number of states. However, this general exponential
bound
does not capture the 
experimental efficiency of the algorithm on most applications.

Some recent results shed light on the behavior
of policy iteration as a function of some
particular parameters, such as the discount factor.
Friedmann constructed in~\cite{Friedmann}
an infinite family of 2-player discounted deterministic games with a discount 
factor tending to $1$, showing that the number of
policy iterations can indeed be exponential. 
Fearnley~\cite{fearnley2} and Andersson~\cite{andersson2009}
extended his result to 1-player stochastic games.
However, Ye showed in~\cite{ye_strong} that 
policy iteration solves 1-player discounted games with
a {\em fixed} discount factor $\lambda<1$ 
in \new{strongly polynomial} time
($\lambda$ is not part of the input).
Then, Hansen, Miltersen and Zwick extended this result
in~\cite{hansen2011strategy} to
zero-sum 2-player discounted games with perfect information, and improved 
Ye's bound.
They showed that the number of external iterations of 
the policy iteration algorithm for 2-player games with a fixed discount factor 
$\lambda<1$ is bounded by:
\begin{equation}\label{borne-hansen}
(m+1) (1+\frac{\log(n^2/(1-\lambda))}{-\log(\lambda)})
={\mathcal O} (\frac{m}{1-\lambda} \log \frac{n}{1-\lambda} ),\end{equation}
where $n$ is the number of states, and $m$ is the \new{total number of actions
of both players}, that is the number of triples $(i,a,b)$ 
where $i$ is a state, $a$ is an action of first player, and $b$ is an
action of second player.

\subparagraph{Contribution}
We show that policy iteration still has a strongly polynomial behavior
for a class of mean payoff games, as well as for a more
general class of discounted games. 

As a preliminary step, we show that we can improve
the bound~\eqref{borne-hansen}, in the original situation considered 
in~\cite{hansen2011strategy}. We replace
this bound by the following one (Theorem~\ref{hansen-improved}):
\begin{equation}\label{notre_borne}
\ki_{\max}:= (m_1-n) (1+\lfloor \frac{\log(1-\lambda)}{\log(\lambda)}
\rfloor )= {\mathcal O} (\frac{m_1-n}{1-\lambda} \log \frac{1}{1-\lambda}),
\end{equation}
with $m_1$ the \new{total number of actions of the first player},
that is the number of couples state-action $(i,a)$.
The above new bound is obtained by adapting the technique of Ye and
Hansen, Miltersen and Zwick to nonlinear maps 
which allows us in particular to replace $m$ by $m_1$. Note that
the bound~\eqref{notre_borne} is linear in the size of the input,
for a fixed $\lambda$.

Then, we consider games with state dependent discount factors,
possibly greater than $1$ locally. We establish
a strongly polynomial bound for the number of iterations (Corollary~\ref{coro-omegabar}) which differs from~\eqref{borne-hansen}
and~\eqref{notre_borne} 
in that the discount factor $\lambda$ is now replaced by the maximum
of the spectral radii of all the transition matrices associated
to pairs of policies of both players.
We introduce a natural scaling transformation,
which has the property of leaving invariant the 
combinatorial trace of the policy iteration algorithm. 
This scaling is obtained using 
techniques of non-linear Perron-Frobenius theory~\cite{LAA,agn}.
An advantage of the present bound is that it is invariant
by scaling. For instance, with a state dependent
discount factor $<1$, it leads to a tighter bound than
the one which may be derived from~\eqref{borne-hansen} or~\eqref{notre_borne}.

Finally, we derive (Corollary~\ref{bound-for-games}) a strongly polynomial bound for the subclass of mean payoff games
such that there is a distinguished state to which the mean return time
is bounded by a constant $K=1/(1-\lambda)$, for every choice
of policies.
This condition implies that each transition matrix associated 
to a pair of policies of both players has a unique recurrence class,
and that there is a state which is common to each of these classes.

The paper is organized as follows. We present
background materials on zero-sum two-player stochastic games in Section~\ref{discrete},
and on policy iterations in Section~\ref{sec-policy}.
We state our results in Section~\ref{sec-main}.
Proofs or sketches of proofs, as well as some results 
of Perron-Frobenius theory on which they are based,
can be found in the following sections. 
\section{Two player zero-sum stochastic games with discrete time and mean payoff}
\label{discrete}

\subsection{The game processes}
Two player zero-sum stochastic games were introduced
by Shapley in the early fifties, see~\cite{Shapley}. We recall in this
section basic definitions in the case of finite state
space and discrete time (for more details see 
\cite{Shapley,FilarVrieze}).
When there is only one player (the set of actions of one of the 
two players is reduced to a singleton), such a game is
more commonly called a Markov Decision Process (MDP),
we refer to~\cite{Howard60,Denardo-Fox68,puterman}
for this topic.

We consider the finite state space $\X := \{1, \dots, n\}$.  A
stochastic process $\proc{\Xk_k}$ on $\X$ gives the state of the game
at each point time $k$, called {\em stage}. At each of these stages, two 
players, called ``\MIN'' and  ``\MAX'' (the minimizer and the maximizer)
 have the possibility to influence the course of the game. 

The \new{stochastic game} $\Gamma(\sx_0)$ starting from $\sx_0 \in \X$ is played
in stages as follows. The initial state  $\Xk_0$ is equal to $\sx_0$ and known by
the players. Player \MIN\ plays first, and chooses an action
$\Ak_0$ in a set of possible actions $\A_{\Xk_0}$. Then the second player,
\MAX, chooses an action $\Bk_0$ in a set of possible actions
$\B_{\Xk_0}$. The actions of both players and the current state
determine the payment $r_{\Xk_0}^{\Ak_0\Bk_0}$ made by \MIN\ to \MAX\ and the
probability distribution $\sy\mapsto P_{\Xk_0\sy}^{\Ak_0\Bk_0}$ of the new state
$\Xk_1$. Then the game continues from state $\Xk_1$, and so
on.

At a stage $k$, each player chooses an action knowing the \new{history}
defined by $\Ik_k = (\Xk_0,\allowbreak \Ak_0,\allowbreak \Bk_0, \allowbreak \cdots,\allowbreak \Xk_{k-1}, \allowbreak\Ak_{k-1},\allowbreak \Bk_{k-1},
\Xk_k)$ for \MIN\ and $(\Ik_k, \Ak_k)$ for \MAX.  
We call a \new{strategy} for a player, a rule which tells him the
action to choose in any situation. 
Assume $\A_\sx \subset \Ag$ and $\B_\sx \subset \Bg$ for some sets $\Ag$ and $\Bg$.
We shall consider only \new{pure Markovian strategies} for \MIN\ (resp. \MAX).
The latter are sequences $\salpha := (\balpha_0, \balpha_1, \cdots)$
(resp.\ $\sbeta := (\bbeta_0, \bbeta_1, \cdots)$)  
where $\balpha_k$ is a map $\X \rightarrow \Ag$ such that $\balpha_k (\sx)
\in \A_{\sx}$ for all $\sx\in\X$ (resp.\ $\bbeta_k$ is a map 
$\X\times\Ag \rightarrow \Bg$ such that $\bbeta_k(\sx,a) \in \B_{\sx}
  \, \forall \sx \in \X, \, a\in\A_{\sx}$).
They are said to be \new{stationary} if they are independent of $k$.
Then $\balpha_k$ is also denoted by $\balpha$ and $\bbeta_k$ by $\bbeta$. 
Also $\salpha$ and $\sbeta$ are identified with $\balpha$ and
$\bbeta$ respectively. A pure Markovian stationary strategy is also called a \new{feedback policy} or simply a \new{policy}.

A strategy $\salpha=\proc{\balpha_k}$ (resp. $\sbeta=\proc{\bbeta_k}$) together with an initial state determines stochastic processes $\proc{\Ak_k}$ for the actions of \MIN, $\proc{\Bk_k}$ for the actions of \MAX\ and $\proc{\Xk_k}$ for the states of the game.
For instance, for each pair of feedback policies ($\balpha$, $\bbeta$) of the two players, the state process 
$\proc{\Xk_k}$ is a Markov chain on $\X$ with transition probability 
\[
P(\Xk_{k+1}=\sy \, | \,  \Xk_k = \sx)\,=\, P^{\balpha(\sx)\bbeta(\sx,\balpha(\sx))}_{\sx \sy}
 \quad \text{ for } \sx, \sy \in \X\enspace,
\]
and  $\Ak_k = \balpha(\Xk_k)$ and $\Bk_k = \bbeta(\Xk_k,\Ak_k)$.

\subsection{Non-uniformly discounted and mean payoff games}

The payoff of the game $\Gamma(\sx)$ starting from $\sx$ is the
expected sum of the rewards at all steps of the game 
that \MAX\ wants to maximize and \MIN\ to
minimize. In this paper we shall consider 
games with an infinite horizon and a
discount factor $\gamma$, which is not uniform in that
it depends both on the state and
actions, $\gamma: \X\times \A\times \B\to [0,\infty)$.
We allow $\gamma(i,a,b)$ to take values larger that $1$
for some $(i,a,b)$.
The reward at time $k$ is defined to be the payment
made by \MIN\ to \MAX\ multiplied by all discount factors 
from time $0$ to time $k-1$. Thus, when the strategies $\salpha$ for
\MAX\ and $\sbeta$ for \MIN\ are fixed, the infinite horizon
discounted payoff of the game
$\Gamma(\sx, \salpha, \sbeta)$ starting from $\sx$ is given by
\[
J^{\gamma}(\sx, \salpha, \sbeta)\,=\, \sE^{\salpha \sbeta}_{\sx} \left[ \, \sum_{k = 0}^{\infty} \Big(\prod_{\ell=0}^{k-1}\gamma(\Xk_{\ell},\Ak_{\ell}, \Bk_{\ell})\Big)
r_{\Xk_k}^{\Ak_k \Bk_k}  \,\right], 
\]
where $\sE^{\salpha, \sbeta}_{\sx}$ denotes the expectation for the
probability law determined by the choice of strategies.
When  $\gamma\leq 1$, meaning that $\gamma(i,a,b)\leq 1 $ holds for all $i\in[n]$, $a\in A$, $b\in B$,
the above discounted game can be seen equivalently as a game which has,
at each stage, a stopping probability equal to $1-\gamma(\sx,a,b)$.

In all the paper, we shall assume, that 
\begin{assumption}\label{assump-finite}
the action spaces  $\A_\sx$ and $\B_\sx$, 
$\sx\in\X$,  are finite sets.
\end{assumption}
We shall write $\gamma\ll 1$ when
the discount factor is such that $\gamma(i,a,b) < 1 $ holds for all $i\in[n]$, $a\in A$, $b\in B$. This is the case 
if and only if there exists a scalar $\lambda$ such that:
\begin{assumption}\label{assump-disc}
$\gamma(i,a,b) \leq \lambda$, for all $i\in[n]$, $a\in A$, $b\in B$,
with $\lambda\in [0,1)$.
\end{assumption}
Then, one can transform the above discounted game into a game with 
an additional state (a ``cemetery'' state)
and a discount factor identically equal to $\lambda$
(independent of state and actions).
We can then apply to this situation earlier results 
concerning constant discount factors.

We shall also consider mean payoff games, defined as follows.
When the strategies $\salpha$ for \MIN\ and $\sbeta$ for \MAX\ are fixed,
the \new{(undiscounted) payoff in finite horizon} $\T$ of the game 
$\Gamma(\sx, \salpha, \sbeta)$ starting from $\sx$ is 
\[
J^\T(\sx, \salpha, \sbeta)\,=\, \sE^{\salpha \sbeta}_{\sx} \left[ \, \sum_{k = 0}^{\T-1} r_{\Xk_k}^{\Ak_k \Bk_k} \,\right], 
\]
and its \new{mean payoff} is
\[
J(\sx, \salpha, \sbeta)\,=\, \limsup_{\T \rightarrow \infty }\,
\frac{1}{\T} \, J^\T(\sx, \salpha, \sbeta).
\]

The discounted infinite horizon game with a discount factor
$\gamma\ll 1$, 
the finite horizon game and the mean payoff game, are all known
to have a \new{value},
denoted respectively by $v^{\gamma}_\sx$, $v^\T_\sx$ and $\meanv_\sx$, 
\begin{eqnarray} 
v^{\gamma}_\sx &:=& \inf_{\salpha} \sup_{\sbeta} J^{\gamma}(\sx, \salpha, \sbeta)
=  \sup_{\sbeta} \inf_{\salpha}\, J^{\gamma}(\sx, \salpha, \sbeta),
\label{valueGdisc}\\
v^\T_\sx &:=& \inf_{\salpha} \sup_{\sbeta} \, J^\T(\sx, \salpha, \sbeta)
=  \sup_{\sbeta} \inf_{\salpha}\, J^\T(\sx, \salpha, \sbeta),
\label{valueG-MP-finite}\\
\meanv_\sx &:=& \inf_{\salpha} \sup_{\sbeta} \,J(\sx, \salpha, \sbeta),
=  \sup_{\sbeta} \inf_{\salpha}\,J(\sx, \salpha, \sbeta), \label{valueG-MP}
\end{eqnarray} 
for all initial states $\sx \in \X$,
where the infimum is taken among all strategies $\salpha$ for \MIN\
and the supremum is taken over all strategies $\sbeta$ for \MAX\
(we refer the reader to~\cite{Shapley} for finite horizon or discounted infinite horizon games with constant discount factor, and to~\cite{liggettlippman} for mean payoff games).

Optimal strategies for both players (together
with the value of the game $\Gamma$ for every initial state)
can be obtained by the dynamic programming approach~\cite{Shapley},
which we next recall.

\subsection{Dynamic programming equations}
\label{sec-dynamic}

When considering finite horizon or mean-payoff games,
we assume that the discount factor $\gamma(i,a,b)$ 
at every state and node is identically 
equal to $1$, written $\gamma\equiv 1$.
To handle in the same setting the discounted and
the mean payoff cases, it will be convenient to consider
the following unnormalized nonnegative cooefficients,
rather than the transition probabilities:
\[ M_{\sx \sy}^{ab}=\gamma(\sx,a,b) P_{\sx \sy}^{ab}\quad \forall \sx,\sy\in\X,
a\in\A_\sx,\; b\in \B_\sx\enspace.\]
We wil also use the following notation,
for all $\sx \in \X,\; a\in\A_\sx,\; b\in \B_\sx$ and $v\in \RX$:
\begin{subequations}\label{formulaF}
\begin{eqnarray} 
\label{ILPIDE}
\fviab{F}{v}{\sx}{a}{b} & = &  \sum_{\sy \in \X}
M_{\sx \sy}^{ab}\, \valx_\sy \, + \, r_\sx^{a b} ;\\
\label{ILPI}
\fvia{F}{v}{\sx}{a} & = & \max_{b \in \B_\sx} \, \fviab{F}{v}{\sx}{a}{b};\\
\label{eq1}
\fvi{F}{v}{\sx} & = &  \, \min_{a \in \A_\sx } \, \fvia{F}{v}{\sx}{a}.
\end{eqnarray}
\end{subequations}

The  \new{dynamic programming} or \new{Shapley operator} associated to
all above games is the self-map $f$ of $\RX$ given by:
\begin{equation} \label{eq1opdef}
[f(v)]_{\sx}:=\fvi{F}{v}{\sx} , \qquad  \forall \sx \in \X,\; v\in \RX.
\end{equation}

The value $v^\T=(v^\T_i)_{i\in \X}$ of the finite horizon game satisfies the 
\new{dynamic programming equation}~\cite{Shapley} 
associated to the operator $f$:
\[ v^{\T+1}\, = \, f(v^\T),\qquad \T=0,1,\dots 
\] 
with initial condition $v^0\equiv 0$ ($v^0_\sx=0,\; \sx \in \X$).

Similarly, the value $v^\gamma=(v^\gamma_i)_{i\in [n]}$
 of the discounted infinite horizon game,
with a discount factor $\gamma\ll 1$, 
is the unique solution $v\in \RX$
of the (stationary) dynamic programming equation~\cite{Shapley}:
\begin{equation} \label{eq1muop}
v\, = \, f(v).
\end{equation}
Also, optimal strategies are obtained for both players by taking pure Markovian stationary strategies $\balpha$ for \MIN\ and $\bbeta$ for \MAX\ such that,
for all $\sx\in\X$, and $a\in\A_{\sx}$,
$\balpha(\sx)$ attains the minimum in the expression of
$\fvi{F}{v}{\sx}$ in~\eqref{eq1},
and $\bbeta(\sx,a)$ attains the maximum in the expression 
of $\fvia{F}{v}{\sx}{a}$ in~\eqref{ILPI}.

The dynamic programming operator $f$ is always \new{order-preserving}, 
i.e., $\valx\leq \valy\implies f(\valx)\leq f(\valy)$ where
$\leq$ denotes the partial ordering of $\RX$
($\valx\leq \valy$ if $\valx_{\sx}\leq \valy_{\sx}$ for all $i\in [n]$).
When $\gamma\leq 1$,  $f$ is also \new{additively subhomogeneous}, 
meaning that it satisfies
$f(\lambda+\valx)\leq \lambda+f(\valx)$ for all $\lambda\in\R$ 
nonnegative ($\lambda\geq 0$) and $\valx\in\RX$, where
$\lambda + \valx:=(\lambda+\valx_i)_{i\in[n]}$.
This implies that $f$ is nonexpansive in the sup-norm.
When in addition Assumption~\ref{assump-disc} holds, the map $f$ 
is contracting in the sup-norm with contraction 
factor $\lambda$, that is:
\[ \|f(v)-f(w)\|\leq \lambda \|v-w\|\enspace,\]
where $\|\cdot\|$ denotes the sup-norm of $\R^n$
($\|v\|=\max\set{|v_i|}{i\in\X}$).
Then, one can solve the fixed point equation~\eqref{eq1muop}
of $f$ by using the fixed point iterations, also called
\new{value iterations} in the optimal control or game context:
$v^{k+1}=f(v^k)$. They will converge geometrically towards the solution
$v$ with factor $\lambda$:
$\lim_{k\to\infty} \|v^k-v\|^{1/k}\leq \lambda$. 
However the complexity of this algorithm is known to
be only pseudo polynomial.
Indeed the number of necessary iterations will depend on the norm
of the solution, which depends itself on the modulus of the 
parameters.

When now $\gamma\equiv 1$, $f$ is
\new{additively homogeneous}, meaning that it commutes
with the addition of a constant vector, i.e., that 
$f(\lambda+\valx)=\lambda+f(\valx)$ for all $\lambda\in\R$ and
$\valx\in\RX$. Then, the mean payoff of the game
can be studied through the following additive eigenproblem
\begin{equation} \label{eq1ergop}
\eta+v\, = \, f(v) \enspace. 
\end{equation}
Here, the vector $v\in \R^n$ is called an \new{additive eigenvector} of $f$
associated to the \new{additive eigenvalue} $\eta\in\R$.
If such an additive eigenpair exists,
then, the value of the mean payoff game represented by $f$
is equal to $\eta$ for all initial states.
Optimal strategies are obtained 
in the same way as for the discounted infinite horizon problem.

\section{Policy iteration algorithm: presentation and preliminary properties}
\label{sec-policy}

\subsection{Assumptions and notations}\label{sec-not}
Assume first that $f$ is given by~\eqref{eq1opdef}, with $F$ as
in~\eqref{formulaF},  $M_{\sx \sy}^{ab}$ nonnegative scalars,
and $\A_\sx$ and $\B_\sx$ finite sets (Assumption~\ref{assump-finite}).
Then, the sets of feedback policies
$\PA:=\set{\balpha : \X \rightarrow \Ag}{\balpha (\sx) \in \A_{\sx}\, \forall \sx\in\X }$
 and $\PB:= \set{\bbeta : \X\times\Ag \rightarrow \Bg}{\bbeta(\sx,a) \in \B_{\sx}
  \, \forall \sx \in \X, \, a\in\A_{\sx}}$ are finite.
Now for every pair of policies $\balpha\in\PA$ and $\bbeta\in\PB$ 
of the first and second players, we define the following matrices and vectors:
\[ M^{(\balpha\bbeta)}=(M^{\balpha_i\bbeta_i}_{ij})_{ij=1,\ldots, n}, 
\quad \text{and}\;
r^{(\balpha\bbeta)}=(r^{\balpha_i\bbeta_i}_{i})_{i=1,\ldots, n}\enspace,\]
and respectively the affine and nonlinear maps 
$f^{(\balpha\bbeta)}$ and $f^{(\balpha)}$ from $\RX$ to itself which coordinates are
given, for all $v\in \RX$, by:
\begin{subequations}\label{rectangular}
\begin{eqnarray}
f_\sx^{(\balpha\bbeta)}(v)&=& \fviab{F}{v}{\sx}{\balpha_\sx}{\bbeta_\sx},\\
f_\sx^{(\balpha)}(v)&=&\fvia{F}{v}{\sx}{\balpha_\sx} .\label{Fmin} 
\end{eqnarray}
\end{subequations}

Then, we can write, for all $v\in\R^n$,
\begin{subequations}
\begin{eqnarray}
f^{(\balpha\bbeta)}(v)&=&M^{(\balpha\bbeta)} v+r^{(\balpha\bbeta)},\\
f^{(\balpha)}(v)&=&\max_{\bbeta \in \PB}  f^{(\balpha\bbeta)}(v),\label{fmax}\\
f(v)&=&  \min_{\balpha\in \PA} f^{(\balpha)}(v)\enspace,\label{fmin}
\end{eqnarray}
\end{subequations}
where in these expressions, the maximum and the minimum
mean the supremum and infimum 
with respect to the partial order of $\R^n$. Note that it is attained 
for an element of $\PB$ and $\PA$ respectively.
Indeed, from~\eqref{rectangular}, the $i$-th entry of
$f^{(\balpha\bbeta)}$ and $f^{(\balpha)}$  
depends only on the policy at state $i$.
We shall say that a set of vectors $V\subset \mathbb{R}^n$
is \new{rectangular} if $V = \pi_1(V)\times \cdots \times \pi_n(V)$,
where $\pi_i: \R^n\to \R$ denotes the projection on the $i$th
coordinates.
It follows that 
the set of vectors $\{f^{(\balpha)}(v)\mid \balpha \in \PA\}$
is rectangular, and that for each $\balpha$, the set $\{f^{(\balpha\bbeta)}(v)\mid \bbeta\in\PB\}$ is also rectangular.

The maps  $f^{(\balpha\bbeta)}$ and $f^{(\balpha)}$ satisfy the same properties 
as the ones stated in Section~\ref{sec-dynamic} for $f$.
They are all order preserving.
When $\gamma\leq 1$ (resp.\ $\gamma\equiv 1$), they are additively 
subhomogeneous (resp.\ homogeneous), hence
nonexpansive in the sup-norm.
When Assumption~\ref{assump-disc} holds, these maps
are contracting in the sup-norm with contraction factor $\lambda$.

\subsection{Policy iteration algorithm for discounted games}
Here we are interested in solving Equation~\eqref{eq1muop}
by using the policy iteration algorithm for discounted games,
introduced by Howard~\cite{Howard60} for 1-player games, 
and by Hoffman and Karp~\cite{HoffmanKarp},
and Denardo~\cite{Denardo} for 2-player games.
It will be convenient to consider the following general algorithm.

\begin{algo}[General policy iteration algorithm]\label{policybase}
\ 

\noindent \emph{Input}: A set $\PA$, and maps $f$ and $f^{(\balpha)}$,
from $\RX$ to itself, for $\balpha\in\PA$, satisfying~\eqref{fmin},
for all $v\in\R^n$.

\noindent \emph{Output}: A fixed point $v$ of $f$ 
and a policy $\balpha\in\PA$ such that $f(v)=f^{(\balpha)}(v)$.
\begin{enumerate}
\item {\em Initialization}:
Set $\ki=0$.
Select an arbitrary strategy $\balpha^0\in\PA$.
\item\label{value} Compute the fixed point $v^{\ki}$ of $f^{(\balpha^{\ki})}$.
\item \label{improve} Improve the policy: choose an optimal policy 
for  $v^{\ki}$, that is $\balpha^{\ki+1}\in\PA$  such that 
$f(v^{\ki})=f^{(\balpha^{\ki+1})}(v^{\ki})$, with 
$\balpha^{\ki+1}= \balpha^{\ki}$ as soon as this is possible.
\item \label{stop} If $\balpha^{\ki+1}= \balpha^{\ki}$,
then the algorithm stops and returns $v^{\ki}$ and $\balpha^{\ki}$.
Otherwise, increment $\ki$ by one and go to   Step~\ref{value}.
\end{enumerate}
\end{algo}

When $\PA$ is as in Section~\ref{sec-not},
and $f^{(\balpha)}$ satisfies also~\eqref{Fmin}, Step~\ref{improve}
is equivalent to
\[ \balpha^{\ki+1}_\sx \ \in \ \argmin{a \in \A_\sx } \, \fvia{F}{v^{\ki}}{\sx}{a},
\quad \sx\in\X,\]
and we can also choose $\balpha^{\ki+1}$ in a conservative way, that is 
such that, for all $\sx\in \X$, $\balpha^{\ki+1}_\sx=\balpha^{\ki}_\sx$ as
soon as this is possible.
Algorithm~\ref{policybase} can also be applied to a map
$f$ satisfying~\eqref{fmin} with the min operation replaced by the max
operation.

When $f$ is as in Section~\ref{sec-not},
the policy iteration algorithm for 2-player games
consists in two levels of 
nested instances of the previous algorithm.

\begin{algo}[Policy iteration algorithm for 2-player games]\label{policy2}
\ 

\noindent \emph{Input}: A map $f$ given as in Section~\ref{sec-not}.

\noindent \emph{Output}: The value $v$ of the game associated to
$f$ and an optimal policy $\balpha\in\PA$.

\begin{itemize}
\item Apply Algorithm~\ref{policybase} (that is
construct the sequences $\balpha^\ki$ of policies and
$v^\ki$ of values, $\ki\geq 0$).
\item The solution $v^\ki$ in Step~\ref{value}
is the value of the game with fixed policy $\balpha^\ki$.
It is computed as follows:
\begin{itemize}
\item Apply  Algorithm~\ref{policybase} to the
set $\PB$ instead of $\PA$, the map $f^{(\balpha_\ki)}$
instead of $f$ and the maps $f^{(\balpha^{\ki}\bbeta)}$ with $\bbeta\in \PB$
instead of the maps $f^{(\balpha)}$ with $\balpha\in\PA$.
This constructs sequences of policies $\bbeta^{\ki,\kj}$ and values
$v^{\ki,\kj}$, with $\kj\geq 0$.
\item When the latter algorithm stops,
put $v^{\ki}=v^{\ki,\kj}$.
Then  $\bbeta^{\ki,\kj}$ is an optimal policy of the second player
of the game with fixed policy $\balpha^\ki$ for the first player.
\end{itemize}
\item When the algorithm stops, return $v^\ki$, $\balpha^\ki$
and $\bbeta^{\ki,\kj}$ with $\ki$ equal to the final index of
the external iteration of Algorithm~\ref{policybase}, 
and $\kj$ the final index of the internal iteration
of Algorithm~\ref{policybase}.
\end{itemize}
\end{algo}

Note that in the nested application of Algorithm~\ref{policybase},
Step~\ref{value} consists in solving a linear system,
which can be done either by a direct linear solver,
or approximately, by an iterative method.
In the present paper, we require an exact
solution.

The usual assumption for the validity of the above algorithms
is Assumption~\ref{assump-disc}.
Under this assumption, one can show (see
for instance~\cite{Bertsekas87}
for one-player games and~\cite{Denardo} for 2-player games)
that the sequence of values $(v^{\ki})_{\ki\geq 0}$ (resp.\ $(v^{\ki,\kj})_{\kj\geq 0}$
for some fixed $\ki\geq 0$) of Algorithm~\ref{policy2} 
is nonincreasing (resp.\ nondecreasing) and 
converges towards the unique fixed point
$v$ of $f$ (resp.\ $v^{\ki}$ of $f^{(\balpha^{\ki})}$),
and deduce in particular that Algorithm~\ref{policy2} (resp.\ each nested 
application of Algorithm~\ref{policybase}
in Algorithm~\ref{policy2})
never visits twice the same policy of the first (resp.\ second) player
(except before stopping).
Then, since the action spaces are finite, 
the algorithm (resp.\ nested policy iterations)
stops after a finite time.

These properties can indeed be obtained 
from the following result concerning
Algorithm~\ref{policybase}.

\begin{prop}\label{policybase-increasing-convergence}
Let $\PA$, $f$ and $f^{(\balpha)}$ be as in Algorithm~\ref{policybase}.
Assume that $\PA$ is finite, 
that the maps $f^{(\balpha)}$ are order preserving and
contracting in the 
sup-norm with the same contraction factor $\lambda$.
We have:
\begin{enumerate}
\item \label{convergence-1} $f$ is also order preserving and
contracting in the sup-norm with contraction factor $\lambda$;
\item \label{convergence-2}
the iterations of  Algorithm~\ref{policybase} are well defined;
\item \label{convergence-3} the sequence $v^\ki$ is nonincreasing and 
converges towards the unique fixed point $v$ of $f$;
\item \label{policyvalue}
more precisely: $v\leq v^{\ki+1}\leq f(v^{\ki})\leq v^\ki$;
\item \label{convergence-5} 
the sequence $\balpha^\ki$ never visits twice the same policy
(except when the stopping condition is satisfied);
\item \label{convergence-6} 
hence Algorithm~\ref{policybase} stops after a finite time.
\end{enumerate}
\end{prop}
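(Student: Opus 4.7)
The plan is to derive items 1--6 from a single key chain of inequalities, namely item~4. For item~1, order preservation of $f=\min_{\balpha} f^{(\balpha)}$ is immediate, and sup-norm contraction with factor $\lambda$ follows from the standard minimax trick: for each $u,v\in\RX$ and each coordinate $\sx$, pick a policy $\balpha^*$ attaining the minimum defining $f(v)_\sx$ to get $f(u)_\sx-f(v)_\sx\leq f^{(\balpha^*)}(u)_\sx-f^{(\balpha^*)}(v)_\sx\leq \lambda\|u-v\|$, then symmetrize in $(u,v)$. Item~2 is then automatic: finiteness of $\PA$ makes the argmin in Step~\ref{improve} attained, and the Banach fixed-point theorem (since each $f^{(\balpha)}$ is a contraction) produces the unique $v^\ki$ in Step~\ref{value}.

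For the core assertion~4, i.e., $v\leq v^{\ki+1}\leq f(v^\ki)\leq v^\ki$, the rightmost inequality follows from the choice of $\balpha^{\ki+1}$:
\[
f(v^\ki)=f^{(\balpha^{\ki+1})}(v^\ki)\leq f^{(\balpha^\ki)}(v^\ki)=v^\ki.
\]
For the middle one, iterate the order-preserving map $f^{(\balpha^{\ki+1})}$ on the inequality $f^{(\balpha^{\ki+1})}(v^\ki)\leq v^\ki$ to see that the sequence $(f^{(\balpha^{\ki+1})})^n(v^\ki)$ is nonincreasing; passing to the contraction limit $v^{\ki+1}$ gives $v^{\ki+1}\leq f^{(\balpha^{\ki+1})}(v^\ki)=f(v^\ki)$. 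For the leftmost inequality, start instead from $v=f(v)\leq f^{(\balpha^{\ki+1})}(v)$, which by the dual order-preservation argument yields a nondecreasing sequence converging to $v^{\ki+1}$, hence $v\leq v^{\ki+1}$. The monotonicity half of item~3 is then contained in item~4.

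It remains to handle items~5 and~6, together with the convergence half of item~3. For item~5, suppose the stopping test fails, i.e.\ $\balpha^{\ki+1}\neq\balpha^\ki$; the conservative tie-breaking rule in Step~\ref{improve} then forces $\balpha^\ki$ to fail to attain the minimum in $f^{(\balpha)}(v^\ki)$ at some coordinate $\sx_0$, so $f(v^\ki)_{\sx_0}<f^{(\balpha^\ki)}(v^\ki)_{\sx_0}=v^\ki_{\sx_0}$, and the inequality $v^{\ki+1}\leq f(v^\ki)$ from item~4 propagates this to $v^{\ki+1}_{\sx_0}<v^\ki_{\sx_0}$. Since any policy uniquely determines its contraction fixed point, no two distinct iterates can share the same policy, proving item~5. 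Item~6 follows from finiteness of $\PA$; and at termination $\balpha^{\ki+1}=\balpha^\ki$ gives $f(v^\ki)=v^\ki$, so by uniqueness of the fixed point of $f$ we conclude $v^\ki=v$, yielding convergence in item~3. The only subtlety I anticipate is in item~5, where the conservative tie-breaking rule is essential to upgrade a strict policy change into a strict value decrease at a specific coordinate; without it one could only assert weak monotonicity and risk cycling through distinct policies sharing the same fixed point.
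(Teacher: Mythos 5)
Your proof is correct and follows essentially the same route as the paper: the key chain $v\leq v^{\ki+1}\leq f(v^{\ki})\leq v^{\ki}$ is obtained by the same iteration-of-the-contraction argument, and your item~5 is the contrapositive of the paper's revisiting argument, both hinging on the conservative tie-breaking rule in exactly the way you flag. The only (harmless) deviation is that you deduce the convergence part of item~3 from finite termination, whereas the paper proves it directly by a monotone-limit-plus-continuity argument; since $\PA$ is finite both are valid.
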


From Point~\ref{policyvalue}, we see that the sequence $(v^{\ki})_{\ki\geq 0}$
of policy iteration algorithm~\ref{policy2} 
converges faster towards $v$ 
than the value iteration algorithm starting from $v^0$.
One can also deduce the following contraction property
(see for instance~\cite{hansen2011strategy}).

\begin{coro}\label{policybase-contraction}
Under the assumptions of Proposition~\ref{policybase-increasing-convergence},
the sequence $v^\ki$ 
satisfies the following contraction property in the sup-norm:
$\|v^{\ki+1}-v\|\leq \lambda  \|v^{\ki}-v\|$.
\end{coro}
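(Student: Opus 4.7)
The plan is to deduce this contraction bound directly from Point~\ref{policyvalue} of Proposition~\ref{policybase-increasing-convergence}, combined with the assumed sup-norm contraction of the maps $f^{(\balpha)}$ (and hence of $f$, by Point~\ref{convergence-1}).

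The starting point is the sandwich inequality $v\le v^{\ki+1}\le f(v^\ki)\le v^\ki$ established in Point~\ref{policyvalue}. Subtracting $v$ componentwise gives $0\le v^{\ki+1}-v\le f(v^\ki)-v$. Since $v$ is the fixed point of $f$, we may rewrite the right-hand side as $f(v^\ki)-f(v)$, so we obtain the componentwise estimate
\[
0\le v^{\ki+1}-v \le f(v^\ki)-f(v).
\]

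The second step is to pass from this componentwise bound to a sup-norm bound. Here I would use the elementary monotonicity property of the sup-norm on nonnegative vectors: if $0\le x\le y$ coordinatewise, then $|x_i|=x_i\le y_i=|y_i|$ for every $i$, hence $\|x\|\le\|y\|$. Applying this to $x=v^{\ki+1}-v$ and $y=f(v^\ki)-f(v)$ yields
\[
\|v^{\ki+1}-v\|\le \|f(v^\ki)-f(v)\|.
\]

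Finally, by Point~\ref{convergence-1} of Proposition~\ref{policybase-increasing-convergence}, the map $f$ is contracting in the sup-norm with contraction factor $\lambda$, so $\|f(v^\ki)-f(v)\|\le\lambda\,\|v^\ki-v\|$, which chained with the previous inequality gives the desired bound. There is no real obstacle: the only mildly delicate point is to keep track of the ordering so that the sup-norm monotonicity can be invoked; everything else is a direct combination of results already proved in the preceding proposition.
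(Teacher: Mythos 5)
Your proof is correct and follows essentially the same route as the paper: both start from the sandwich inequality $v\le v^{\ki+1}\le f(v^{\ki})\le v^{\ki}$, deduce $\|v^{\ki+1}-v\|\le\|f(v^{\ki})-v\|$ from the ordering, and conclude by the $\lambda$-contraction of $f$. You merely make explicit the sup-norm monotonicity step that the paper leaves implicit.
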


\subsection{Policy iteration algorithm for mean-payoff games}
Now, we assume that $\gamma\equiv 1$, and are interested in solving
the optimality equation of the mean payoff problem, Equation~\eqref{eq1ergop},
by policy iteration.
The first algorithm doing so was introduced by Hoffman 
and Karp~\cite{HoffmanKarp}, assuming
all the matrices $M^{(\balpha\bbeta)}$ to be irreducible. 
This algorithm is very similar to the algorithm for discounted games,
so we only present here the differences. 

\begin{algo}[General policy iteration algorithm for the mean payoff additive eigenproblem]
\label{policymeanbase}
\ 

Same as Algorithm~\ref{policybase}, except that
\begin{itemize}
\item  $f$ and $f^{(\balpha)}$
are assumed to be additively homogeneous;
\item the initialization includes the
selection of an arbitrary state $\rs\in \X$;
\item instead of a  fixed point $v$ of $f$,
the algorithm is returning an additive eigenvector $v$ 
and associated eigenvalue $\eta$ of $f$ ($\eta+v=f(v)$)
such that $v_\rs=0$;
\item 
the computation of a fixed point  $v^{\ki}$ of
$f^{(\balpha^{\ki})}$ is replaced by the computation
of an additive eigenvector $v^{\ki}$ and associated eigenvalue $\eta^{\ki}$
of $f^{(\balpha^{\ki})}$ ($\eta^\ki+v^\ki=f^{(\balpha^{\ki})}(v^\ki)$),
such that $v^\ki_\rs=0$.
\end{itemize}
\end{algo}

Note that since the maps $f$ and  $f^{(\balpha)}$ are additively 
homogeneous, changing $\rs$ into another state $\tilde{\rs}$
does not change the admissible sequences of policies,
and only modifies the additive eigenvectors by additive constants.
Indeed, for any given input,
$\balpha^{\ki}$, $\eta^\ki$ and $v^{\ki}$ are 
respectively admissible sequences of policies,
additive eigenvalues, and additive eigenvectors
with $\rs$, if and only if
$\balpha^{\ki}$, $\eta^\ki$ and $\tilde{v}^{\ki}=v^\ki-v^\ki_{\tilde{\rs}}$
are respectively admissible sequences of policies,
additive eigenvalues, and additive eigenvectors
with $\tilde{\rs}$.

\begin{algo}[Hoffman and Karp policy iteration algorithm for 
2-player mean-payoff games]\label{policy2mean}
\ 

Same as Algorithm~\ref{policy2}, except that
\begin{itemize}
\item  we assume that $\gamma\equiv 1$;
\item instead of the value $v$ of the game associated to
$f$, the algorithm is returning the value $\eta$ and
a bias $v$ such that $v_\rs=0$;
\item Algorithm~\ref{policybase} is replaced by Algorithm~\ref{policymeanbase};
\item the algorithm constructs
the sequences of values $\eta^\ki$ and bias $v^\ki$ of
the game with a fixed policy $\balpha^\ki$, 
with $v^\ki_\rs=0$, and for each $\ki\geq 0$, 
it constructs the sequences of values $\eta^{\ki,\kj}$ and bias
$v^{\ki,\kj}$, $\kj\geq 0$, of the game with fixed policies $\balpha^\ki$ and 
$\bbeta^{\ki,\kj}$ of the first and second player,
with $v^{\ki,\kj}_\rs=0$.
\end{itemize}
\end{algo}

Some or all of the following properties can be found 
in~\cite{Bertsekas87}
for one-player games and~\cite{HoffmanKarp} for 2-player games.

\begin{prop}\label{policy-increasing-mean}
Assume all the matrices $M^{(\balpha\bbeta)}$, $\balpha\in\PA$, $\bbeta\in\PB$, 
are irreducible. Then, 
the sequence of values $(\eta^{\ki})_{\ki\geq 0}$ 
(resp.\ $(\eta^{\ki,\kj})_{\kj\geq 0}$
for some fixed $\ki\geq 0$)
of Algorithm~\ref{policy2mean} 
is nonincreasing (resp.\ nondecreasing)
and converges towards the unique eigenvalue 
$\eta$ of $f$ (resp.\ $\eta^{\ki}$ of $f^{(\balpha^{\ki})}$).
Also the sequence of bias $(v^{\ki})_{\ki\geq 0}$ 
(resp.\ $(v^{\ki,\kj})_{\kj\geq 0}$
for some fixed $\ki\geq 0$)
of Algorithm~\ref{policy2mean} 
converges towards the unique bias
$v$ of $f$ such that $v_\rs=0$ (resp.\ $v^{\ki}$ of $f^{(\balpha^{\ki})}$
such that $v^{\ki}_\rs=0$).
\end{prop}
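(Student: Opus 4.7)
The plan is to reduce the proposition to a single ``policy improvement lemma'' applied twice (once at each level of nesting), leaning on the classical Perron--Frobenius theory for irreducible stochastic matrices. The starting point is the following consequence of irreducibility: for each pair $(\balpha,\bbeta)$, the matrix $M^{(\balpha\bbeta)}$ is irreducible and row-stochastic (since $\gamma\equiv 1$), so it has a unique invariant probability measure $\mu^{(\balpha\bbeta)}>0$, and the affine additive eigenproblem
\[
\eta + v \,=\, M^{(\balpha\bbeta)} v + r^{(\balpha\bbeta)}
\]
admits a solution with $\eta = (\mu^{(\balpha\bbeta)})^{\top} r^{(\balpha\bbeta)}$ uniquely determined, and $v$ unique up to an additive constant, hence unique once normalized by $v_\rs=0$. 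This guarantees that step~\ref{value} of the inner Algorithm~\ref{policymeanbase} is always well defined, and yields existence and uniqueness of the pair $(\eta^{\ki,\kj},v^{\ki,\kj})$.

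Next I would prove the key policy improvement lemma for the inner iteration (fixed $\balpha=\balpha^\ki$, MAX player improves $\bbeta$). The improvement step gives $f^{(\balpha^\ki \bbeta^{\ki,\kj+1})}(v^{\ki,\kj}) \geq f^{(\balpha^\ki)}(v^{\ki,\kj}) \geq f^{(\balpha^\ki \bbeta^{\ki,\kj})}(v^{\ki,\kj}) = \eta^{\ki,\kj} + v^{\ki,\kj}$. Setting $h = v^{\ki,\kj+1} - v^{\ki,\kj}$, $M' = M^{(\balpha^\ki \bbeta^{\ki,\kj+1})}$ and subtracting the fixed-point relation of the new policy, one obtains
\[
(\eta^{\ki,\kj+1}-\eta^{\ki,\kj}) + h \,\geq\, M' h.
\]
Multiplying on the left by the Perron left-eigenvector $\mu'>0$ of $M'$ (with $\mu' \un = 1$) and using $\mu'^{\top} M' = \mu'^{\top}$ yields $\eta^{\ki,\kj+1} \geq \eta^{\ki,\kj}$. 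A refinement of the same argument --- examining the case of equality and exploiting the coordinatewise strict gain at the states where $\bbeta^{\ki,\kj+1}\neq \bbeta^{\ki,\kj}$, together with the normalization $v_\rs=0$ --- shows that the pair $(\eta^{\ki,\kj},v^{\ki,\kj})$ is strictly lex-increasing whenever the policy strictly changes. Hence no policy is visited twice, and by the finiteness Assumption~\ref{assump-finite} the inner loop stops in finite time; at termination $(\eta^{\ki,\kj},v^{\ki,\kj})$ satisfies $\eta^{\ki,\kj}+v^{\ki,\kj} = f^{(\balpha^\ki)}(v^{\ki,\kj})$, so by uniqueness it coincides with $(\eta^\ki,v^\ki)$.

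For the outer iteration, I would replay exactly the same argument with the roles reversed (MIN player, so inequalities flip). One obtains $\eta^\ki$ non-increasing, no repetition of $\balpha$-policies, finite termination, and convergence of $(\eta^\ki,v^\ki)$ to the unique eigenpair $(\eta,v)$ of $f$ with $v_\rs=0$; uniqueness of the eigenpair for the minmax map $f$ under the irreducibility hypothesis is itself a classical consequence of the same Perron--Frobenius argument applied to an optimal pair of policies at the fixed point, and can be quoted from \cite{liggettlippman}.

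The main obstacle in this program is the strict improvement step: unlike the discounted case, where the contraction factor $\lambda<1$ directly yields a strictly decreasing sequence of values, here the eigenvalue $\eta^{\ki,\kj}$ may stay constant across an iteration, and one must exploit the \emph{bias} component and the chosen normalization to rule out cycling. The irreducibility hypothesis is used crucially at this point: it ensures that the invariant measure $\mu^{(\balpha\bbeta)}$ is strictly positive, so that a coordinatewise improvement at any single state in the support translates into a strict improvement of the averaged quantities appearing in the Perron--Frobenius identity above.
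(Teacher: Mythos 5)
Your argument is correct, and it is essentially the classical Hoffman--Karp/Bertsekas proof. Note, however, that the paper does not actually prove Proposition~\ref{policy-increasing-mean}: it explicitly defers to the references given just above the statement (Bertsekas for the one-player case, Hoffman and Karp for the two-player case), so there is no in-paper proof to compare against; what you have written is a reconstruction of the cited argument. The reconstruction is sound: the Poisson-equation solvability and the identity $\eta^{(\balpha\bbeta)}=(\mu^{(\balpha\bbeta)})^{\top}r^{(\balpha\bbeta)}$ follow from irreducibility, the inequality $(\eta^{\ki,\kj+1}-\eta^{\ki,\kj})+h\geq M'h$ paired with the strictly positive left invariant vector gives monotonicity at the inner level, and the reversed inequalities handle the outer level; termination plus uniqueness of the normalized eigenpair then yields the stated convergence (the sequences are eventually stationary).

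One small point of imprecision, which you in fact resolve yourself in your closing paragraph: the appeal to a \emph{lexicographic} strict increase of the pair $(\eta^{\ki,\kj},v^{\ki,\kj})$ is not needed here. Under irreducibility the improvement vector $d=f^{(\balpha^{\ki}\bbeta^{\ki,\kj+1})}(v^{\ki,\kj})-\eta^{\ki,\kj}-v^{\ki,\kj}\geq 0$ satisfies $\eta^{\ki,\kj+1}-\eta^{\ki,\kj}=(\mu')^{\top}d$, so either $d=0$ (in which case the conservative tie-breaking rule of Step~\ref{improve} keeps the old policy and the loop stops) or $\eta^{\ki,\kj+1}>\eta^{\ki,\kj}$ strictly; the bias component never has to enter the anti-cycling argument. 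The lexicographic refinement is what one would need in the multichain setting, where the invariant measure can vanish on some states --- a setting the paper explicitly excludes. With that simplification noted, your proof stands as a complete and correct substitute for the omitted citation.
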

\begin{coro}\label{policy-convergence-mean}
Assume all the matrices $M^{(\balpha\bbeta)}$, $\balpha\in\PA$, $\bbeta\in\PB$, 
are irreducible. Then, 
Algorithm~\ref{policy2mean} (resp.\ each nested 
application of Algorithm~\ref{policymeanbase}
in Algorithm~\ref{policy2mean})
never visits twice the same policy of the first (resp.\ second) player
(except when the stopping condition is verified).
Hence, the policy iterations (resp.\ nested policy iterations)
stop after a finite time.
\end{coro}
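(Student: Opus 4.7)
The plan is to deduce the corollary by refining Proposition~\ref{policy-increasing-mean} into a strict lexicographic monotonicity statement and then combining it with the fact that, under the irreducibility assumption, each policy is mapped injectively to its normalized eigenpair. This reduces the no-repetition claim, and hence finite termination, to a one-step strict-decrease property.

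First, I would record uniqueness: for each pair $(\balpha,\bbeta)\in\PA\times\PB$, the nonnegative matrix $M^{(\balpha\bbeta)}$ is irreducible, so Perron--Frobenius applied to the affine map $f^{(\balpha\bbeta)}=M^{(\balpha\bbeta)}v+r^{(\balpha\bbeta)}$ yields a unique additive eigenvalue $\eta^{(\balpha\bbeta)}$, with an essentially unique eigenvector that is made unique by the normalization $v_\rs=0$. The same holds for the one-player operator $f^{(\balpha)}$ once the inner Algorithm~\ref{policymeanbase} has converged. Consequently, the map sending a policy to its normalized eigenpair is injective, so if no policy is ever revisited then Algorithm~\ref{policy2mean} must stop within $|\PA|$ outer steps, and analogously each inner call within $|\PB|$ steps.

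Next, I would establish strict lexicographic monotonicity of $(\eta^\ki,v^\ki)$. At any non-terminating outer step we have $\balpha^{\ki+1}\neq\balpha^\ki$, and by the conservative selection rule in Step~\ref{improve}, the inequality $f^{(\balpha^{\ki+1})}(v^\ki)\le f^{(\balpha^\ki)}(v^\ki)=\eta^\ki+v^\ki$ must be strict in at least one coordinate; otherwise the algorithm would have retained $\balpha^\ki$ and stopped. Solving for the eigenpair of $f^{(\balpha^{\ki+1})}$ and invoking irreducibility of the induced nonnegative matrix, a standard Perron--Frobenius subsolution comparison forces either $\eta^{\ki+1}<\eta^\ki$, or $\eta^{\ki+1}=\eta^\ki$ together with $v^{\ki+1}\le v^\ki$ and strict inequality in some coordinate. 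So the pair $(\eta^\ki,v^\ki)$ strictly decreases in the lexicographic order at every non-terminating step. Combined with the injectivity above, this forbids revisiting any policy, and since $\PA$ is finite by Assumption~\ref{assump-finite}, the algorithm stops after a finite time. The identical scheme, with the roles of min and max interchanged, handles each nested application of Algorithm~\ref{policymeanbase}, using the irreducibility of $M^{(\balpha^\ki\bbeta)}$ for every $\bbeta$.

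The main obstacle will be the strict decrease of the bias when the eigenvalue stays constant. Here one must argue that a strict improvement of the Bellman residual $f^{(\balpha^{\ki+1})}(v^\ki)-\eta^\ki-v^\ki$ at some coordinate propagates, through the strongly connected communication graph of the irreducible matrix $M^{(\balpha^{\ki+1})}$, to a strict decrease of the normalized eigenvector in some coordinate. This is precisely where the irreducibility hypothesis enters in an essential way, via a Perron--Frobenius comparison principle; once this lemma is in place, the remainder of the argument is combinatorial.
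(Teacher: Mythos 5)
Your skeleton --- (i) for a fixed policy the normalized eigenpair is uniquely determined, so a revisited policy would reproduce the same $(\eta,v)$, and (ii) some quantity strictly decreases at every non-terminal step --- is the right one, and it is essentially the classical Hoffman--Karp argument that the paper simply cites (the paper offers no proof of this corollary beyond the references \cite{HoffmanKarp,Bertsekas87} and Proposition~\ref{policy-increasing-mean}). Two corrections, though. First, a terminological one: you do not need the policy-to-eigenpair map to be \emph{injective}; you need it to be \emph{well defined}, i.e.\ that irreducibility forces uniqueness of $\eta$ and of the $\rs$-normalized bias for each fixed policy. Second, and more importantly, the decisive step --- the strict one-step decrease --- is left unproven, and you have located the difficulty in the wrong place. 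The whole point of the irreducibility hypothesis is that the gain $\eta^{\ki}$ \emph{alone} strictly decreases whenever the policy changes, so the lexicographic refinement and the bias-comparison lemma you flag as ``the main obstacle'' are never needed (and that bias comparison is genuinely delicate to even state correctly for mean-payoff problems).

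Here is the missing step. If $\balpha^{\ki+1}\neq\balpha^{\ki}$, the conservative choice in Step~\ref{improve} gives $f^{(\balpha^{\ki+1})}(v^{\ki})\leq f^{(\balpha^{\ki})}(v^{\ki})=\eta^{\ki}+v^{\ki}$ with strict inequality at each coordinate $i$ where the policy changed. Hence, for every $\bbeta\in\PB$, $M^{(\balpha^{\ki+1}\bbeta)}v^{\ki}+r^{(\balpha^{\ki+1}\bbeta)}\leq \eta^{\ki}+v^{\ki}$, strictly at $i$. Multiplying on the left by the stationary distribution $\pi^{(\bbeta)}$ of the irreducible Markov matrix $M^{(\balpha^{\ki+1}\bbeta)}$ --- which is positive at every state, in particular at $i$ --- cancels the $v^{\ki}$ terms and yields $\pi^{(\bbeta)}r^{(\balpha^{\ki+1}\bbeta)}<\eta^{\ki}$; taking the maximum over $\bbeta$ gives $\eta^{\ki+1}<\eta^{\ki}$. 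Since a revisited policy would have to reproduce the same eigenvalue, no policy of the first player is ever revisited, and finiteness of $\PA$ (Assumption~\ref{assump-finite}) yields termination; the identical argument with the inequalities reversed, applied to the irreducible matrices $M^{(\balpha^{\ki}\bbeta)}$, handles each nested run of Algorithm~\ref{policymeanbase}. With this lemma inserted in place of your dichotomy, your argument closes.
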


Note that the above algorithms cannot be applied
to multichain games (such that some matrices
$M^{(\balpha\bbeta)}$ have at least two final classes),
since then, the value of the game is not any more
given by a constant $\eta$ independent of the initial
state. See~\cite{CochetGaub,detournay2} for a discussion
of the multichain case.

\section{Bounds on the number of policy iterations}
\label{sec-main}

In the sequel, we shall state as far as possible
our results in the framework of the general policy iteration 
algorithms~\ref{policybase} and~\ref{policymeanbase}, the
application of these results to the zero-sum two-player game policy iteration 
algorithms~\ref{policy2} and~\ref{policy2mean} being immediate.

\subsection{Revisiting the bound of Ye and 
Hansen, Miltersen and Zwick with non linear maps}

The following improvement of~\cite{hansen2011strategy} is obtained
by the same arguments as in~\cite{hansen2011strategy}, 
except that we use the nonlinear maps
$f^{(\balpha)}$ directly instead of the affine maps $f^{(\balpha\bbeta)}$,
and that we use only sup-norms, whereas $\ell^1$ norms were used in some 
places in~\cite{hansen2011strategy}.

\begin{theorem}\label{hansen-improved}
Let $\PA$, $f$, $f^{(\balpha)}$ and $\lambda$
be as in Proposition~\ref{policybase-increasing-convergence}.
Assume also that $\PA$ is as in Section~\ref{sec-not},
and that $f^{(\balpha)}$ satisfies~\eqref{Fmin}.

Then the policy iteration  algorithm~\ref{policybase} stops after at most
$\ki_{\max}$ iterations, where $\ki_{\max}:= (m_1-n)  (1+\lfloor \frac{\log(1-\lambda)}{\log(\lambda)} \rfloor )$
and $m_1$ is the cardinality of 
$\SA:=\set{(\sx,a)}{\sx\in \X,\; a\in A_\sx }$.
\end{theorem}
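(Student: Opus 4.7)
The plan is to replay the argument of~\cite{hansen2011strategy} with two modifications: first, work directly with the nonlinear maps $f^{(\balpha)}$ rather than the affine maps $f^{(\balpha\bbeta)}$; second, use sup-norm estimates throughout, avoiding the $\ell^1$-norm conversions of~\cite{hansen2011strategy}. The first modification replaces the state-action-action triples $(\sx,a,b)$ by state-action pairs $(\sx,a)$---absorbing the \MAX\ player's choice into the definition of $f^{(\balpha)}$---which is what allows the factor $m$ to be replaced by $m_1-n$. The second modification removes the $\log n$ factor inside the logarithm of~\eqref{borne-hansen}.

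Concretely, by the rectangular structure from Section~\ref{sec-not}, for each pair $(\sx,a)\in\SA$ the scalar $g_\sx^a(v):=[f^{(\balpha)}(v)]_\sx$ (for any $\balpha$ with $\balpha_\sx=a$) is well-defined, order-preserving and $\lambda$-Lipschitz in sup-norm. We then have $[f(v)]_\sx=\min_{a\in\A_\sx}g_\sx^a(v)$, and the improvement step of Algorithm~\ref{policybase} selects $\balpha^{\ki+1}_\sx\in\argmin{a\in\A_\sx}g_\sx^a(v^\ki)$ with conservative tie-breaking. Designate at each state $\sx$ a single \emph{optimal action} $a^*_\sx\in\argmin{a\in\A_\sx}g_\sx^a(v)$ compatible with this tie-breaking rule, and call $(\sx,a)$ \emph{suboptimal} when $a\neq a^*_\sx$; there are exactly $m_1-n$ such pairs, each with strictly positive optimality gap $\Delta_\sx^a:=g_\sx^a(v)-v_\sx$. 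Corollary~\ref{policybase-contraction} yields $\|v^\ki-v\|_\infty\leq\lambda^\ki\|v^0-v\|_\infty$, while the identity $v^\ki_\sx=g_\sx^a(v^\ki)$, valid whenever $\balpha^\ki_\sx=a$, combined with the $\lambda$-Lipschitzness of $g_\sx^a$ and the componentwise monotonicity $v\leq v^{\ki+1}\leq v^\ki$, forces $v^\ki_\sx-v_\sx\geq\Delta_\sx^a$ at each iteration where the suboptimal pair $(\sx,a)$ is active.

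The core quantitative estimate, adapted from~\cite{hansen2011strategy,ye_strong}, is that each policy switch at state $\sx$ produces a minimum improvement of sup-norm order $(1-\lambda)\|v^\ki-v\|_\infty$; combined with the contraction, this yields, for each suboptimal pair, a bound of $L:=1+\lfloor\log(1-\lambda)/\log\lambda\rfloor$ on the number of iterations at which it can be active. Amortization then finishes the proof: at every non-terminal iteration the current policy is not optimal for $v$, so at least one of the $n$ chosen pairs $(\sx,\balpha^\ki_\sx)$ is suboptimal, and summing over iterations gives $\ki_{\max}\leq(m_1-n)L$. The main obstacle is executing the ``minimum improvement $\gtrsim(1-\lambda)\|v^\ki-v\|_\infty$'' step purely in sup-norm terms, since the original estimates in~\cite{hansen2011strategy} are derived via $\ell^1$-type quantities and do not transfer verbatim; the nonexpansiveness and monotonicity properties recalled at the end of Section~\ref{sec-not}, together with the rectangular structure and the careful choice of $a^*_\sx$ via conservative tie-breaking, are what make the adaptation go through.
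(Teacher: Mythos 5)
Your overall strategy---running the Ye/Hansen--Miltersen--Zwick elimination argument directly on the residuals $R^a_\sx:=\fvia{F}{v}{\sx}{a}-v_\sx$ of the nonlinear maps, entirely in sup-norm---is indeed the paper's strategy, and several intermediate observations are correct (well-definedness of $g^a_\sx$ by rectangularity; $v^\ki_\sx-v_\sx\geq R^a_\sx$ whenever $(\sx,a)$ is active at iteration $\ki$; the use of Corollary~\ref{policybase-contraction}). However, the step you flag as ``the main obstacle'' is the heart of the proof and is genuinely missing. What is needed is the two-sided bound
\[ \|R^{(\balpha^{\ki})}\|\ \leq\ \|v^\ki-v\|\ \leq\ \frac{1}{1-\lambda}\,\|R^{(\balpha^{\ki})}\|\enspace,\]
whose upper half follows in sup-norm from $\|v^\ki-v\|\leq\|v^\ki-f^{(\balpha^\ki)}(v)\|+\|f^{(\balpha^\ki)}(v)-v\|\leq\lambda\|v^\ki-v\|+\|R^{(\balpha^\ki)}\|$, using only that $v^\ki$ is the fixed point of the $\lambda$-contraction $f^{(\balpha^\ki)}$. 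Without this, no ``minimum improvement of order $(1-\lambda)\|v^\ki-v\|$'' is available.

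More seriously, your amortization rests on a false per-pair bound. It is not true that each suboptimal pair $(\sx,a)$ can be active at at most $L$ iterations in total: the only constraint activity imposes is $R^a_\sx\leq\|v^\ki-v\|$, so a pair whose residual is much smaller than $\|v^0-v\|$ can remain in the current policy for arbitrarily many consecutive iterations (roughly $\log(R^a_\sx/\|v^0-v\|)/\log\lambda$ of them, unbounded over instances with fixed $\lambda$). Relatedly, your claim that all $m_1-n$ non-designated pairs have strictly positive gap fails whenever several actions at a state achieve the argmin. The correct counting is relative: if $(\sx,a)$ realizes the maximum of $R$ over $\graph(\balpha^\ki)$ and $\balpha^\ki$ is not optimal, then $R^a_\sx=\|R^{(\balpha^\ki)}\|\geq(1-\lambda)\|v^\ki-v\|>0$, whereas activity of $(\sx,a)$ at any $t\geq\ki+p$ would give $R^a_\sx\leq\|R^{(\balpha^t)}\|\leq\frac{\lambda^p}{1-\lambda}\|R^{(\balpha^\ki)}\|<R^a_\sx$, a contradiction; hence the current worst pair is \emph{permanently} excluded from all graphs from iteration $\ki+p$ on. One new pair is thus banned in each block of $p$ iterations, and after $(m_1-n)p$ iterations only $n$ pairs remain available, which forces the graph of the policy and hence termination. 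Replacing ``each suboptimal pair is active at most $L$ times'' by ``the current worst pair never returns after $p$ steps'' is necessary for the argument to close.
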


\subsection{Discounted games with state dependent discount factors}
\label{sec-discount}
We denote by $\eigenrad{M}$ the spectral radius
of a $n\times n$ matrix $M$,
that is the maximum of the moduli of its eigenvalues.
When $\varphi\in\RX$ has strictly positive coordinates and
$v\in \RX$, we set
$\varphi^{-1}:=(\varphi_\sx^{-1})_{\sx \in \X}\in \RX$
and $\varphi v:= (\varphi_\sx v_\sx)_{\sx \in \X}\in \RX$
(these are the usual notations, if we identify
$\RX$ to the set of functions from $\X$ to $\R$).
For all self-maps $f$ of $\RX$, we denote by
$\scaled{\varphi}{f}$ its scaling by $\varphi$, which is
the map $\scaled{\varphi}{f}: v\mapsto  \varphi^{-1}f(\varphi v)$.
It is easy to see that if $f$ is order preserving so is $\scaled{\varphi}{f}$.a
The following result shows that these scalings leave invariant
the sequences of policies generated by the policy iteration algorithm. 
A sequence of policies and fixed points will be said to be {\em admissible}
for a given input if there is a valid run of the algorithm on this
input producing this sequence.
\begin{prop}[Scaling Invariance]\label{prop-general-scaling}
Let $\PA$, $f$ and $f^{(\balpha)}$ be as in Algorithm~\ref{policybase},
and let $\varphi\in\RX$ have strictly positive coordinates.
Denote $\tilde{f}:=\scaled{\varphi}{f}$
and $\tilde{f}^{(\balpha)}=\scaled{\varphi}{f^{(\balpha)}}$.
Then, $\PA$, $\tilde{f}$ and $\tilde{f}^{(\balpha)}$ constitute
a valid input of Algorithm~\ref{policybase}. Moreover,
$\balpha^{\ki}$ and $\tilde{v}^{\ki}=\varphi^{-1} v^\ki$
constitute an admissible sequence of policies
and fixed points for this input, if and only if
$\balpha^{\ki}$ and $v^\ki$ constitute an admissible sequence of policies
and fixed points for the original input $\PA$, $f$ and $f^{(\balpha)}$.
\end{prop}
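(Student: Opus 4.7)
The plan is to exploit the simple fact that componentwise multiplication by $\varphi$ (and by $\varphi^{-1}$), viewed as a self-map of $\RX$, is an order-preserving bijection, since all coordinates of $\varphi$ are strictly positive. As a consequence, this map commutes with the pointwise minimum over any finite family of vectors. The whole proposition will follow by checking that every step of Algorithm~\ref{policybase} is equivariant under the bijection $v \mapsto \tilde v := \varphi^{-1} v$, so that valid runs on $(\PA, f, f^{(\balpha)})$ and on $(\PA, \tilde f, \tilde f^{(\balpha)})$ are in bijection via $\balpha^\ki \leftrightarrow \balpha^\ki$ and $v^\ki \leftrightarrow \tilde v^\ki = \varphi^{-1} v^\ki$.

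First I would check that the scaled input is valid, namely that $\tilde f(v) = \min_{\balpha\in\PA} \tilde f^{(\balpha)}(v)$ for every $v\in\RX$. Writing $\tilde f(v) = \varphi^{-1} f(\varphi v) = \varphi^{-1} \min_{\balpha} f^{(\balpha)}(\varphi v)$ and using that $\varphi^{-1}$ has strictly positive coordinates, the coordinatewise min commutes with multiplication by $\varphi^{-1}$, giving $\tilde f(v) = \min_\balpha \varphi^{-1} f^{(\balpha)}(\varphi v) = \min_\balpha \tilde f^{(\balpha)}(v)$, which is \eqref{fmin} for the scaled data.

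Next I would verify the two equivariances that drive the algorithm. For the value step (Step~\ref{value}), $v$ is a fixed point of $f^{(\balpha)}$ if and only if $\tilde v = \varphi^{-1} v$ is a fixed point of $\tilde f^{(\balpha)}$: indeed, $\tilde f^{(\balpha)}(\tilde v) = \varphi^{-1} f^{(\balpha)}(\varphi\tilde v) = \varphi^{-1} f^{(\balpha)}(v)$, which equals $\tilde v$ iff $f^{(\balpha)}(v) = v$. For the policy improvement step (Step~\ref{improve}), by applying $\varphi^{-1}$ to both sides of $f(v^\ki) = f^{(\balpha^{\ki+1})}(v^\ki)$, the identity is equivalent to $\tilde f(\tilde v^\ki) = \tilde f^{(\balpha^{\ki+1})}(\tilde v^\ki)$; thus the sets of admissible successor policies at a given step coincide for the two inputs.

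Finally, Step~\ref{stop} and the tie-breaking rule ``$\balpha^{\ki+1}=\balpha^\ki$ as soon as possible'' depend only on the sequence of policies, which is preserved by construction. The two directions of the equivalence follow from one another by symmetry, since $v \mapsto \varphi^{-1} v$ is a bijection with inverse $v\mapsto \varphi v$, and the latter corresponds to the scaling by $\varphi^{-1}$ applied to $\tilde f$. There is no real obstacle here: the only care needed is the observation that positive componentwise scaling commutes with min, which is what makes the whole argument reduce to elementary algebraic manipulation.
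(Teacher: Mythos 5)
Your proof is correct and follows essentially the same route as the paper's: the paper likewise observes that $v=f(v)$ iff $\varphi^{-1}v=\scaled{\varphi}{f}(\varphi^{-1}v)$ and that the scaling preserves the order on maps (your ``positive scaling commutes with the attained infimum''), which together give the equivariance of the value and improvement steps. Your write-up is merely more explicit about checking that \eqref{fmin} survives the scaling and about the symmetry of the two directions.
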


For a set $\MC$ of $n\times n$ matrices
$\MC\subset \R^{n\times n}$, we shall define 
its \new{rectangular hull}, denoted $\rec{\MC}$,
as the set of matrices $N$ such that, for all $i\in\X$,
 the row $i$ of $N$ coincides with the row $i$ of some element $M$
of $\MC$.
When $g$ is a polyhedral self-map of $\RX$, so that
$\RX$ can be covered by finitely many polyhedra on which $g$ 
is affine, we shall denote by $\rgrad{g}$ the finite set of
matrices representing the differential of $g$ in each of these polyhedra.

The proof of the following result is based on nonlinear 
Perron-Frobenius theory and in particular on some results 
in~\cite{LAA,agn}.

\begin{theorem}\label{general-scaling}
Let $\PA$, $f$ and $f^{(\balpha)}$ be as in Algorithm~\ref{policybase}.
Assume that $\PA$ is as in Section~\ref{sec-not}, that
$f^{(\balpha)}$ satisfies~\eqref{Fmin},
and that the maps $f^{(\balpha)}$ are order preserving and
polyhedral.
Let $\MC(\balpha)= \rec{\rgrad{f^{(\balpha)}}}$ and
$\MC=\cup_{\balpha\in \PA} \MC(\balpha)$.
Assume that the spectral radii of all the matrices $M$
in $\MC$ are strictly less than $1$, and denote by 
$\omega$ the maximum of these spectral radii.
Then 
for all $\lambda$ such that
$\omega <\lambda<1$, there exists $\varphi\in\RX$ with strictly positive
coordinates such that the scaled maps $\tilde{f}:=\scaled{\varphi}{f}$
and $\tilde{f}^{(\balpha)}=\scaled{\varphi}{f^{(\balpha)}}$
are contracting in the sup-norm with contraction factor $\lambda$.
\end{theorem}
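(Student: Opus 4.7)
My plan is to translate the scaled contraction requirement into a generalized eigenvector inequality, and then extract the scaling vector $\varphi$ from nonlinear Perron--Frobenius theory.

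First, I would reformulate what it means for $\tilde f^{(\balpha)} = \scaled{\varphi}{f^{(\balpha)}}$ to be $\lambda$-contracting in the sup-norm. Since $f^{(\balpha)}$ is polyhedral and order-preserving, its local linearizations are nonnegative matrices, and the local linearizations of the scaled map are the diagonal conjugates $\mathrm{diag}(\varphi)^{-1} M\, \mathrm{diag}(\varphi)$ for $M\in \rgrad{f^{(\balpha)}}$. For a nonnegative matrix $N$ one has $\snorm{N}_\infty = \max_i \sum_j N_{ij}$, so the sup-norm operator norm of the conjugate equals $\max_i \varphi_i^{-1}(M\varphi)_i$. Because the sup-norm decouples coordinatewise, the relevant set of linearizations is the rectangular hull $\MC(\balpha)=\rec{\rgrad{f^{(\balpha)}}}$. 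Hence demanding that every $\tilde f^{(\balpha)}$ be $\lambda$-Lipschitz amounts to finding $\varphi>0$ with $M\varphi\leq \lambda\varphi$ componentwise for every $M\in \MC$.

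Second, I would exploit the rectangular structure of $\MC$. By~\eqref{Fmin}, the $i$-th coordinate of $f^{(\balpha)}$ depends only on $\balpha_i$, so the $i$-th row of any matrix in $\MC(\balpha)$ depends only on $\balpha_i$. Splicing rows from different policies therefore produces the matrix associated with yet another element of $\PA$, so $\MC$ is closed under taking rectangular hulls, i.e.\ $\rec{\MC}=\MC$. The map $h:\R^n_+\to \R^n_+$ defined by $h(\varphi)_i := \max_{M\in\MC}(M\varphi)_i$ is then polyhedral, order-preserving, and positively homogeneous of degree one, and by the rectangularity the maximum is attained by a single matrix of $\MC$.

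Third, I would invoke the nonlinear Collatz--Wielandt theory of \cite{LAA,agn}. The cone spectral radius $\cspr{h}$ admits the variational characterization $\cspr{h}=\inf\{\mu>0 : \exists\,\varphi>0,\ h(\varphi)\leq \mu\varphi\}$, and the rectangular structure forces $\cspr{h}=\sup_{M\in\MC}\eigenrad{M}=\omega$. For any $\lambda$ with $\omega<\lambda<1$ this yields a strictly positive $\varphi\in\RX$ with $h(\varphi)\leq \lambda\varphi$, hence $M\varphi\leq \lambda\varphi$ for every $M\in \MC$. By the first step, each $\tilde f^{(\balpha)}$ is then $\lambda$-Lipschitz in the sup-norm, and since $\lambda<1$ is a contraction; the map $\tilde f = \min_{\balpha\in\PA}\tilde f^{(\balpha)}$ inherits the same Lipschitz constant as a pointwise minimum.

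The main obstacle is the Perron--Frobenius step. The bound $\cspr{h}\geq \omega$ is easy (pick the Perron vector of an extremal matrix), but the converse---that the maximum of ordinary spectral radii already controls the cone spectral radius of the full family---is the genuine content of the nonlinear Perron--Frobenius results and relies essentially on $\rec{\MC}=\MC$, so that the ``worst row'' at each iteration is realized by an actual element of $\MC$ whose spectral radius we control. Some additional care will be needed to ensure strict positivity of $\varphi$ in the absence of an irreducibility assumption, which is handled classically by a small perturbation of $\MC$ followed by a compactness argument.
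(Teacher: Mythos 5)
Your proposal is correct and follows essentially the same route as the paper: reduce the scaled contraction to the componentwise inequality $M\varphi\leq\lambda\varphi$ for all $M$ in the rectangular hull $\MC$, introduce the order-preserving positively homogeneous map $h(v)=\max_{M\in\MC}Mv$, identify its cone spectral radius with $\omega$ via the rectangularity (Proposition~\ref{prop-rhosup}), and extract $\varphi$ from the Collatz--Wielandt characterization (Theorem~\ref{theorem-nussbaum}). The only cosmetic difference is your final worry about strict positivity of $\varphi$: no perturbation argument is needed, since the Collatz--Wielandt number is by definition an infimum over vectors in the interior of the cone, so any $\lambda>\cw{\R_+}(h)=\omega$ directly yields a strictly positive $\varphi$.
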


Using Theorem~\ref{general-scaling} and
Proposition~\ref{prop-general-scaling}, we obtain:
\begin{coro}
Under the assumptions of Theorem~\ref{general-scaling},
the conclusion of Proposition~\ref{policybase-increasing-convergence} holds.
\end{coro}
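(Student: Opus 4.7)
The plan is to reduce to the already-proven Proposition~\ref{policybase-increasing-convergence} by transporting the problem through the scaling of Proposition~\ref{prop-general-scaling}. Since $\omega<1$ by hypothesis, fix any $\lambda$ with $\omega<\lambda<1$. Theorem~\ref{general-scaling} then supplies a vector $\varphi\in\RX$ with strictly positive coordinates such that the scaled maps $\tilde f:=\scaled{\varphi}{f}$ and $\tilde f^{(\balpha)}:=\scaled{\varphi}{f^{(\balpha)}}$ are contracting in the sup-norm with factor $\lambda$.

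Before invoking Proposition~\ref{policybase-increasing-convergence} on the scaled data, I need two quick verifications. First, each $\tilde f^{(\balpha)}$ is order-preserving: if $v\leq w$ then $\varphi v\leq \varphi w$ (coordinatewise, since $\varphi>0$), hence $f^{(\balpha)}(\varphi v)\leq f^{(\balpha)}(\varphi w)$, and multiplication by $\varphi^{-1}>0$ preserves the inequality. Second, the scaling commutes with the coordinatewise minimum in~\eqref{fmin}: $\tilde f(v)=\varphi^{-1}f(\varphi v)=\varphi^{-1}\min_{\balpha\in\PA}f^{(\balpha)}(\varphi v)=\min_{\balpha\in\PA}\tilde f^{(\balpha)}(v)$, again because $\varphi^{-1}$ acts coordinatewise and is positive. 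Thus $(\PA,\tilde f,\tilde f^{(\balpha)})$ is a valid input of Algorithm~\ref{policybase} and fulfills the hypotheses of Proposition~\ref{policybase-increasing-convergence}.

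Applying that proposition to the scaled problem yields all six conclusions for $\tilde f$ and the scaled sequences $\tilde v^{\ki}$: $\tilde f$ is order-preserving and sup-norm contracting; the iterations are well-defined; $\tilde v^{\ki}$ is nonincreasing with $\tilde v\leq \tilde v^{\ki+1}\leq \tilde f(\tilde v^{\ki})\leq \tilde v^{\ki}$ and converges to the unique fixed point $\tilde v$ of $\tilde f$; no policy is revisited; and the algorithm terminates in finite time. Proposition~\ref{prop-general-scaling} then transports these back: admissible sequences $(\balpha^{\ki},v^{\ki})$ for the original input correspond bijectively to admissible sequences $(\balpha^{\ki},\tilde v^{\ki})$ for the scaled input via $v^{\ki}=\varphi \tilde v^{\ki}$. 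Since the map $v\mapsto \varphi v$ is a coordinatewise-order-preserving linear isomorphism of $\RX$, the monotonicity, uniqueness, non-revisiting and termination statements transfer verbatim to the original sequences. The contraction statement for $f$ holds in the weighted sup-norm $\|v\|_\varphi:=\|\varphi^{-1}v\|_\infty$ with rate $\lambda$, a norm equivalent to the standard sup-norm in finite dimension.

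There is no substantive obstacle: all the analytic work is concentrated in Theorem~\ref{general-scaling} (which rests on the nonlinear Perron--Frobenius results of~\cite{LAA,agn}) and in the already-established Proposition~\ref{policybase-increasing-convergence}. The corollary itself amounts to bookkeeping, namely checking that the scaling construction of Proposition~\ref{prop-general-scaling} commutes with every structural property appearing in the conclusion; the only conceptual caveat worth flagging explicitly is that the contraction rate is obtained with respect to a weighted, rather than the standard, sup-norm.
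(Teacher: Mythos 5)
Your proposal is correct and follows exactly the route the paper intends: combine Theorem~\ref{general-scaling} (to produce the scaling vector $\varphi$ making the maps $\lambda$-contractions) with Proposition~\ref{prop-general-scaling} (to transport the run of Algorithm~\ref{policybase} back to the original input), then invoke Proposition~\ref{policybase-increasing-convergence} on the scaled data. Your explicit caveat that the contraction conclusion for the original $f$ is with respect to the weighted sup-norm $\|v\|_\varphi=\|\varphi^{-1}v\|_\infty$ is a correct and worthwhile clarification of a point the paper leaves implicit.
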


Applying Theorem~\ref{general-scaling}, 
Proposition~\ref{prop-general-scaling}, and
Theorem~\ref{hansen-improved} to all  $\lambda$ such that 
$\omega<\lambda<1$, we obtain:
\begin{coro}
Under the assumptions of Theorem~\ref{general-scaling},
the conclusion of Theorem~\ref{hansen-improved} holds
with $\lambda=\omega$.
\end{coro}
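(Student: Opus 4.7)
The proof is a clean combination of the three preceding results: Theorem~\ref{general-scaling} to obtain a suitable rescaling, Theorem~\ref{hansen-improved} to bound the number of iterations on the rescaled problem, and Proposition~\ref{prop-general-scaling} to transfer that bound back to the original problem. The plan is to fix any $\lambda$ with $\omega<\lambda<1$, then apply Theorem~\ref{general-scaling} to produce a strictly positive $\varphi\in\RX$ such that $\tilde f:=\scaled{\varphi}{f}$ and $\tilde f^{(\balpha)}:=\scaled{\varphi}{f^{(\balpha)}}$ are sup-norm contractions with contraction factor $\lambda$.

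Next I would check that the triple $(\PA,\tilde f,\tilde f^{(\balpha)})$ satisfies the hypotheses of Theorem~\ref{hansen-improved}. Writing the scaling in coordinates,
\[
\tilde f^{(\balpha)}_\sx(v) = \max_{b\in\B_\sx}\Bigl[\sum_{\sy\in\X}\tilde M^{\balpha_\sx b}_{\sx\sy}\,v_\sy + \tilde r_\sx^{\balpha_\sx b}\Bigr],\qquad \tilde M^{ab}_{\sx\sy}:=\varphi_\sx^{-1}M^{ab}_{\sx\sy}\varphi_\sy\ge 0,\quad \tilde r_\sx^{ab}:=\varphi_\sx^{-1}r_\sx^{ab},
\]
so $\tilde f^{(\balpha)}$ has the structure required in~\eqref{Fmin} on the same index set $\SA$ (in particular with the same cardinality $m_1$), it is order-preserving, and $\tilde f=\min_{\balpha\in\PA}\tilde f^{(\balpha)}$. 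Theorem~\ref{hansen-improved} then gives that Algorithm~\ref{policybase} applied to the scaled data terminates after at most $(m_1-n)\bigl(1+\lfloor\log(1-\lambda)/\log(\lambda)\rfloor\bigr)$ iterations. By the Scaling Invariance Proposition~\ref{prop-general-scaling}, the admissible sequences of policies for the original and the scaled inputs coincide (the fixed points just differ via $v^\ki\mapsto\varphi^{-1}v^\ki$), so the same bound controls the number of iterations of Algorithm~\ref{policybase} on the original data, for every choice of $\lambda\in(\omega,1)$.

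It remains to drive $\lambda$ down to $\omega$. The function $\lambda\mapsto\log(1-\lambda)/\log(\lambda)$ is continuous and strictly increasing on $(0,1)$ (a quick derivative computation shows that both terms in the numerator of its derivative have the same sign), hence $\lfloor\log(1-\lambda)/\log(\lambda)\rfloor$ is nondecreasing in $\lambda$; a case analysis depending on whether $\log(1-\omega)/\log(\omega)$ is an integer shows that its infimum over $(\omega,1)$ is exactly $\lfloor\log(1-\omega)/\log(\omega)\rfloor$. Taking the infimum of the upper bound over $\lambda\in(\omega,1)$ yields the conclusion of Theorem~\ref{hansen-improved} with $\lambda=\omega$. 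The only nontrivial point in the whole argument is the verification that scaling preserves the max-affine structure required by Theorem~\ref{hansen-improved}; the rest is bookkeeping around the limit in $\lambda$.
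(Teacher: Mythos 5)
Your proof is correct and follows exactly the route the paper intends: for each $\lambda\in(\omega,1)$ combine Theorem~\ref{general-scaling}, Proposition~\ref{prop-general-scaling} and Theorem~\ref{hansen-improved}, then let $\lambda\downarrow\omega$. The two details you supply --- that the scaling preserves the rectangular max-affine structure (and hence the set $\SA$ and $m_1$) required by Theorem~\ref{hansen-improved}, and that $\lfloor\log(1-\lambda)/\log\lambda\rfloor$ is nondecreasing with right limit $\lfloor\log(1-\omega)/\log\omega\rfloor$ at $\omega$ --- are exactly the points the paper leaves implicit, and you verify them correctly.
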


\begin{coro}\label{coro-omegabar}
Let $\PA$, $\PB$  and $f$ be  given as in Section~\ref{sec-not}.
Assume that the spectral radii of all the matrices $M^{(\balpha\bbeta)}$,
$\balpha\in\PA$, $\bbeta\in\PB$, are
strictly less than $1$, so that 
\(\bar{\omega}:=\max_{\balpha\in\PA,\bbeta\in\PB}
\eigenrad{M^{(\balpha\bbeta)}}<1\).
Then the conclusion of Theorem~\ref{hansen-improved} holds
for the policy iteration algorithm
for 2-player games, Algorithm~\ref{policy2}
(instead of Algorithm~\ref{policybase}),
with $\lambda=\bar{\omega}$.
\end{coro}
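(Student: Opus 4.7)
The plan is to apply the preceding corollary, which asserts Theorem~\ref{hansen-improved} with $\lambda=\omega$, twice: once to the outer \MIN-iteration of Algorithm~\ref{policy2}, and once to each inner \MAX-iteration. Both loops are instances of Algorithm~\ref{policybase}, the inner one using the $\max$ variant explicitly allowed after the statement of Algorithm~\ref{policybase}.

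For the outer loop, the key step is to verify that the set $\MC=\bigcup_{\balpha\in\PA}\rec{\rgrad{f^{(\balpha)}}}$ appearing in Theorem~\ref{general-scaling} consists of matrices with spectral radius at most $\bar\omega<1$. By~\eqref{fmax}, each $f^{(\balpha)}$ is the pointwise maximum over $\bbeta\in\PB$ of the affine maps $v\mapsto M^{(\balpha\bbeta)}v+r^{(\balpha\bbeta)}$, so every matrix in $\rgrad{f^{(\balpha)}}$ is of the form $M^{(\balpha\bbeta)}$. A matrix in the rectangular hull has its row $i$ equal to row $i$ of some $M^{(\balpha\bbeta_i)}$, with $\bbeta_i$ possibly depending on $i$; but row $i$ of $M^{(\balpha\bbeta)}$ depends on $\bbeta$ only through the value $\bbeta(i,\balpha(i))\in\B_i$, and these values at different $i$'s can be prescribed independently for a single $\bbeta\in\PB$. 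Setting $\bbeta(i,\balpha(i)):=\bbeta_i(i,\balpha(i))$ for every $i$ yields a single $\bbeta\in\PB$ realizing the whole matrix, so $\MC\subseteq\{M^{(\balpha\bbeta)}:\balpha\in\PA,\ \bbeta\in\PB\}$ and $\omega=\max_{M\in\MC}\eigenrad{M}\le\bar\omega<1$. The remaining hypotheses of Theorem~\ref{general-scaling} (order preservation and polyhedrality of each $f^{(\balpha)}$, the form~\eqref{Fmin}, finiteness of $\PA$) are immediate, and the preceding corollary bounds the number of outer iterations by the quantity of Theorem~\ref{hansen-improved} with $\lambda=\bar\omega$.

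For each fixed $\balpha^{\ki}$, the inner loop is an instance of Algorithm~\ref{policybase} (max version) on the inputs $\PB$, $f^{(\balpha^{\ki})}$ and the affine maps $f^{(\balpha^{\ki}\bbeta)}$. Since $f^{(\balpha^{\ki}\bbeta)}$ is affine with $\rgrad{f^{(\balpha^{\ki}\bbeta)}}=\{M^{(\balpha^{\ki}\bbeta)}\}$, whose rectangular hull is itself, the set playing the role of $\MC$ reduces to $\{M^{(\balpha^{\ki}\bbeta)}:\bbeta\in\PB\}$, all of whose spectral radii are at most $\bar\omega$. Applying the preceding corollary once more yields a bound on the inner iterations of the form of Theorem~\ref{hansen-improved} with $\lambda=\bar\omega$ and with $m_1$ replaced by the number of pairs $(i,b)$ with $b\in\B_i$.

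The only nontrivial point is the identification of $\rec{\rgrad{f^{(\balpha)}}}$ as a subset of $\{M^{(\balpha\bbeta)}:\bbeta\in\PB\}$ for the outer loop; this rests on the rectangularity of the family $\{f^{(\balpha\bbeta)}(v):\bbeta\in\PB\}$ recalled in Section~\ref{sec-not}. The rest is a direct chaining of the preceding corollary to the two nested levels of Algorithm~\ref{policy2}.
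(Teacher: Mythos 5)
Your proof is correct and follows essentially the route the paper intends (the paper states this corollary without proof, as a direct specialization of the preceding one): the heart of the matter is exactly your verification that $\rec{\rgrad{f^{(\balpha)}}}\subseteq\{M^{(\balpha\bbeta)}:\bbeta\in\PB\}$ via the rectangularity of $\PB$, so that $\omega\le\bar\omega$ and the previous corollary applies to the outer loop. Your additional bound on the inner (max-version) iterations is not required by the statement, which only concerns the external iterations of Algorithm~\ref{policy2}, but it is a correct and harmless complement.
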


\begin{coro}
Let $\lambda\in [0,1)$ be fixed. Then,
the policy iteration algorithm solves in strongly polynomial
time the instances of zero-sum 2-player ``discounted''
stochastic games with perfect information
and state dependent discount factors (possibly locally greater than $1$)
that are such that the spectral radii of the transition matrices associated 
to every pair of policies of the two players is bounded
by $\lambda$.
\end{coro}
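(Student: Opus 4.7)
The plan is to obtain this statement as a direct consequence of Corollary~\ref{coro-omegabar}, together with an analogous bound on the internal iterations of Algorithm~\ref{policy2} and the observation that the per-iteration cost is strongly polynomial.

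First I would set $\bar\omega := \max_{\balpha\in\PA,\bbeta\in\PB} \eigenrad{M^{(\balpha\bbeta)}}$. The hypothesis gives $\bar\omega \leq \lambda < 1$. Corollary~\ref{coro-omegabar} then bounds the number of external iterations of Algorithm~\ref{policy2} by $(m_1-n)(1+\lfloor \log(1-\bar\omega)/\log(\bar\omega)\rfloor)$. Since the function $x\mapsto \lfloor \log(1-x)/\log(x)\rfloor$ is nondecreasing on $[0,1)$, this is bounded by $(m_1-n)(1+\lfloor \log(1-\lambda)/\log(\lambda)\rfloor)$, which is linear in $m_1-n$ with a constant depending only on the fixed $\lambda$.

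Second I would bound, for each fixed external policy $\balpha\in\PA$, the number of internal iterations. The internal algorithm is Algorithm~\ref{policybase} (in its max version) applied to the polyhedral order-preserving map $f^{(\balpha)}$ with the affine maps $(f^{(\balpha\bbeta)})_{\bbeta\in\PB}$. By the rectangularity of $\{f^{(\balpha\bbeta)}(v)\mid \bbeta\in\PB\}$ discussed in Section~\ref{sec-not}, every matrix in $\rec{\rgrad{f^{(\balpha)}}}$ is of the form $M^{(\balpha\bbeta)}$ for some $\bbeta$, and so has spectral radius at most $\bar\omega$. Theorem~\ref{general-scaling} then furnishes, for any $\lambda'$ with $\bar\omega<\lambda'<1$, a positive vector $\varphi\in\RX$ such that $\scaled{\varphi}{f^{(\balpha)}}$ and all $\scaled{\varphi}{f^{(\balpha\bbeta)}}$ are sup-norm contractions of factor $\lambda'$. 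By the scaling invariance of Proposition~\ref{prop-general-scaling}, applying Theorem~\ref{hansen-improved} in the scaled game yields the same combinatorial bound for the unscaled one, namely $(m_2-n)(1+\lfloor \log(1-\lambda')/\log(\lambda')\rfloor)$ internal iterations per external iteration, where $m_2$ is the cardinality of $\{(\sx,a,b)\mid \sx\in\X,\; a\in\A_\sx,\; b\in\B_\sx\}$. Choosing $\lambda'$ arbitrarily close to $\bar\omega$ (or simply $\lambda'=\lambda$) gives a bound polynomial in the input for fixed $\lambda$.

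Third, I would verify that each iteration costs a strongly polynomial number of arithmetic operations: evaluating $f^{(\balpha\bbeta)}$ and the min/max used in the policy improvement step is elementary, and computing the fixed point of the affine map $v\mapsto M^{(\balpha\bbeta)}v+r^{(\balpha\bbeta)}$ amounts to solving the linear system $(I-M^{(\balpha\bbeta)})v=r^{(\balpha\bbeta)}$, which is well-posed since $\eigenrad{M^{(\balpha\bbeta)}}\leq \bar\omega<1$, and can be carried out by Gaussian elimination in a strongly polynomial number of operations. Multiplying the external bound, the internal bound, and the per-iteration cost yields the claim.

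The single delicate point is the application of the scaling argument to the inner iteration: it must be checked that the matrices arising in $\rec{\rgrad{f^{(\balpha)}}}$ are exactly the $M^{(\balpha\bbeta)}$ with $\bbeta\in\PB$, so that the spectral-radius hypothesis on pairs of policies suffices to invoke Theorem~\ref{general-scaling}. This is an immediate consequence of the rectangularity observation recalled in Section~\ref{sec-not}; all remaining ingredients are direct combinations of the results previously established in the paper.
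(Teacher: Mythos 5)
Your proposal is correct and follows the route the paper intends: this corollary is stated as an immediate consequence of Corollary~\ref{coro-omegabar} (itself obtained by combining Theorem~\ref{general-scaling}, Proposition~\ref{prop-general-scaling} and Theorem~\ref{hansen-improved}), and you assemble exactly these ingredients. You are in fact more explicit than the paper on the two points it leaves implicit --- the bound on the internal iterations, obtained by the same scaling argument applied to $f^{(\balpha)}$ and the affine maps $f^{(\balpha\bbeta)}$ together with the rectangularity observation identifying $\rec{\rgrad{f^{(\balpha)}}}$ with $\{M^{(\balpha\bbeta)}\mid\bbeta\in\PB\}$, and the strongly polynomial per-iteration cost via Gaussian elimination on $(I-M^{(\balpha\bbeta)})v=r^{(\balpha\bbeta)}$ --- and both details are handled correctly.
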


\subsection{Mean-payoff games with a renewal state}
\label{sec-mean-payoff}

For a Markov matrix $M$ and states $i,j$, we shall denote:
\[ \TC_{ij}(M)=\sE [\inf\set{k\geq 1}{X_k=j}\mid X_0=i]\enspace, \]
the expected first mean return time to state $j$ of a Markov chain
$X_k$ with transition matrix $M$ and initial state $i$.
It is easy to see that $\TC_{i\rs}(M)<+\infty$ for
all $i\in X$ if and only if $M$ has a unique final (recurrent) class
and that $\rs$ belongs to this class.
The state $\rs$ is called a \new{renewal state}.

The following transformation will allow us
to replace a self-map $f$ of $\RX$ 
by a sup-norm contraction. This will play a similar role
to the scaling transformation used in the discounted case.

Let $\varphi\in \RX$ have positive coordinates and $\rs\in \X$.
Then, the map $L_\varphi$ which to a couple $(\eta,v)$,
with $\eta\in \R$ and $v\in\RX$ such that $v_\rs=0$,
associates the vector $w=\eta+\varphi^{-1} v\in\RX$ is an affine
isomorphism, with inverse given by: $\eta=w_\rs$ and
$v= \varphi( w- w_\rs)$.
For all self-maps $f$ of $\RX$,
we shall denote by $\linear{\varphi}{f}$ the self-map of $\RX$,
such that for all $w,v\in\RX$ and $\eta\in\R$ with $v_\rs=0$
and $w=\eta+\varphi^{-1} v$,
we have $\linear{\varphi}{f}(w)=\varphi^{-1}(\eta (\varphi-1)+f(v))$.

\begin{prop}\label{prop-general-chang-mean}
Let $\PA$, $f$, and $f^{(\balpha)}$ be as in 
Algorithm~\ref{policymeanbase},
and let $\varphi\in\RX$ have strictly positive coordinates and $\rs\in\X$.
Denote $\tilde{f}:=\linear{\varphi}{f}$
and $\tilde{f}^{(\balpha)}=\linear{\varphi}{f^{(\balpha)}}$.
Then, $\PA$, $\tilde{f}$ and $\tilde{f}^{(\balpha)}$ are
a valid input of Algorithm~\ref{policybase}. Moreover
$\balpha^{\ki}$ and  $\tilde{v}^\ki=\eta^\ki+\varphi^{-1} v^\ki$
constitute an admissible sequence of policies
and fixed points for Algorithm~\ref{policybase} on this input,
if and only if $\balpha^{\ki}$, $\eta^\ki$ and $v^\ki$
constitute an admissible sequence of policies,
additive eigenvalues and additive eigenvectors 
for Algorithm~\ref{policymeanbase} on the original
input $\PA$, $f$, $f^{(\balpha)}$, when $\rs$ is chosen.
\end{prop}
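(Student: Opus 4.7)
The approach is to verify that the affine bijection $L_\varphi : (\eta,v)\mapsto w=\eta+\varphi^{-1}v$ (with $v_\rs=0$) conjugates Algorithm~\ref{policymeanbase} run on $(\PA,f,f^{(\balpha)})$ to Algorithm~\ref{policybase} run on $(\PA,\tilde f,\tilde f^{(\balpha)})$ step by step. To this end I would first check the bijection: for $v_\rs=0$, $w_\rs=\eta+\varphi_\rs^{-1}\cdot 0=\eta$, whence $\eta$ is recovered as $w_\rs$ and $v=\varphi(w-w_\rs)$. Thus, given any $w\in\RX$, the pair $(\eta,v)=(w_\rs,\varphi(w-w_\rs))$ is the unique $(\eta,v)$ with $v_\rs=0$ such that $w=L_\varphi(\eta,v)$, and this unambiguously defines $\tilde f^{(\balpha)}(w)$ and $\tilde f(w)$ by the formula in the statement.

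Next I would check the three structural properties needed. First, $\tilde f$ is the min over $\balpha$ of the $\tilde f^{(\balpha)}$: since $\varphi^{-1}$ has strictly positive coordinates and the term $\eta(\varphi-1)$ does not depend on $\balpha$, the componentwise identity
\[
\tilde f(w)=\varphi^{-1}\bigl(\eta(\varphi-1)+\min_{\balpha\in\PA}f^{(\balpha)}(v)\bigr)=\min_{\balpha\in\PA}\tilde f^{(\balpha)}(w)
\]
follows from~\eqref{fmin}, and the minimum is attained at the same $\balpha$'s as in $f(v)=\min_\balpha f^{(\balpha)}(v)$. Hence $(\PA,\tilde f,\tilde f^{(\balpha)})$ is a valid input of Algorithm~\ref{policybase}, and the policy-improvement step (Step~\ref{improve}) selects the same $\balpha^{\ki+1}$ in both algorithms. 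Second, I would verify that $w$ is a fixed point of $\tilde f^{(\balpha)}$ iff $(\eta,v)=(w_\rs,\varphi(w-w_\rs))$ is an additive eigenpair of $f^{(\balpha)}$ with $v_\rs=0$: rewriting $\tilde f^{(\balpha)}(w)=w$ as $\varphi^{-1}(\eta(\varphi-1)+f^{(\balpha)}(v))=\eta+\varphi^{-1}v$, multiplying through by $\varphi$ and simplifying gives $f^{(\balpha)}(v)=\eta+v$. Conversely, from $\eta+v=f^{(\balpha)}(v)$ one recovers $\tilde f^{(\balpha)}(w)=w$. This justifies that Step~\ref{value} of Algorithm~\ref{policybase} on the transformed input is well-defined precisely when the corresponding Step~\ref{value} of Algorithm~\ref{policymeanbase} on the original input is, and computes the image by $L_\varphi$ of the eigenpair produced by the latter.

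Putting these observations together, one proves by induction on $\ki$ that a sequence $(\balpha^\ki,v^\ki)$ with $v^\ki_\rs=0$ together with eigenvalues $\eta^\ki$ is admissible for Algorithm~\ref{policymeanbase} on $(\PA,f,f^{(\balpha)},\rs)$ iff $(\balpha^\ki,\tilde v^\ki)$ with $\tilde v^\ki=L_\varphi(\eta^\ki,v^\ki)=\eta^\ki+\varphi^{-1}v^\ki$ is admissible for Algorithm~\ref{policybase} on $(\PA,\tilde f,\tilde f^{(\balpha)})$: the initializations coincide (an arbitrary $\balpha^0$), Step~\ref{value} corresponds via the fixed-point/eigenpair equivalence established above, Step~\ref{improve} picks the same policy by the minimization identity, and the stopping condition $\balpha^{\ki+1}=\balpha^\ki$ is identical. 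The only potentially delicate point is the conservative tie-breaking rule in Step~\ref{improve}; here it poses no difficulty, because the sets of minimizing policies at $v^\ki$ and at $\tilde v^\ki$ coincide exactly, so the same conservative choice is available in both algorithms. No step involves any nontrivial estimate, so I do not expect a real obstacle beyond careful bookkeeping.
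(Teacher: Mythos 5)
Your proposal is correct and follows essentially the same route as the paper's (much terser) proof: the fixed-point/eigenpair correspondence under $L_\varphi$ together with the fact that the transformation $f\mapsto\linear{\varphi}{f}$ preserves the order on maps, hence the minimum structure~\eqref{fmin}, the policy-improvement step, and the stopping test all carry over. Your additional care with the normalization $v^\ki_\rs=0$ and the conservative tie-breaking rule is just explicit bookkeeping of what the paper leaves implicit.
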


\begin{theorem}\label{general-chang-mean}
Let $\PA$, $f$, and $f^{(\balpha)}$ be as in 
Algorithm~\ref{policymeanbase}.
Assume that $\PA$ is as in Section~\ref{sec-not}, that
$f^{(\balpha)}$ satisfies~\eqref{Fmin}, and
that the maps $f^{(\balpha)}$ are order preserving and polyhedral.
Let $\MC(\balpha)= \rec{\rgrad{f^{(\balpha)}}}$ and
$\MC=\cup_{\balpha\in \PA} \MC(\balpha)$.
Then, all matrices $M$ in $\MC$ are Markov matrices.
Assume that they all have a unique final class, and there there is a 
state $\rs\in \X$ which is common to each of these classes, so that 
\[ \TC_{i\rs}:=\max_{M\in \MC} \TC_{i\rs}(M) <+\infty \quad \forall i \in \X\enspace. \]
Let $\varphi\in\RX$ be the vector with coordinates $\varphi_i =\TC_{i\rs}
\geq 1$, and $K=\max_{i\in \X} \TC_{i\rs}$.
Then, the transformed maps $\tilde{f}:=\linear{\varphi}{f}$
and $\tilde{f}^{(\balpha)}=\linear{\varphi}{f^{(\balpha)}}$
are order-preserving and
contracting in the sup-norm with contraction factor $\lambda=(K-1)/K$.
\end{theorem}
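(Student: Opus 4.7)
The plan is to derive the order-preservation and contraction of $\tilde f^{(\balpha)}$ from an explicit computation of its Jacobian on each piece of its polyhedral structure, and then propagate these properties to $\tilde f$ by observing that the transformation $\linear{\varphi}{\cdot}$ commutes with pointwise minimum. That each $M\in\MC$ is a Markov matrix is immediate: since $f^{(\balpha)}$ is order-preserving and additively homogeneous, its Jacobian at any smooth point has nonnegative entries and row sums equal to $1$, and this property is preserved by taking a rectangular hull.

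The crucial ingredient is the following super-solution inequality:
\[ \varphi_i \;\geq\; 1 + \sum_{j\neq \rs} M_{ij}\, \varphi_j \qquad \forall\, M\in\MC,\; i\in \X\enspace.\]
I would establish this by viewing $\MC$ as the set of stationary policies of a stochastic shortest path MDP for the maximizer: since the joint policy set $\PA\times\PB$ decomposes as a product over states, the set $\MC$ is itself rectangular, so the row at each state may be chosen independently, defining a valid MDP with absorbing goal $\rs$ and per-step reward $+1$. Under the theorem's assumption every $M\in\MC$ has $\rs$ in its unique final class, so every stationary policy is proper; by standard SSP theory, $\varphi_i = \max_{M\in\MC}\TC_{i\rs}(M)$ is then the optimal value of this MDP and satisfies the DP equation $\varphi_i = 1 + \max_{M\in\MC}\sum_{j\neq \rs}M_{ij}\varphi_j$, which is precisely the displayed inequality.

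With the super-solution inequality at hand, I would compute the Jacobian of $\tilde f^{(\balpha)}$ on a linear piece where the Jacobian of $f^{(\balpha)}$ equals some $M\in\MC(\balpha)$. Starting from $\tilde f^{(\balpha)}_i(w) = (1-\varphi_i^{-1})w_\rs + \varphi_i^{-1} f^{(\balpha)}_i(\varphi(w-w_\rs))$, the chain rule yields
\[ \frac{\partial \tilde f^{(\balpha)}_i}{\partial w_j} = \varphi_i^{-1} M_{ij}\varphi_j \;(j\neq \rs),\qquad
\frac{\partial \tilde f^{(\balpha)}_i}{\partial w_\rs} = 1 - \varphi_i^{-1}\Bigl(1 + \sum_{k\neq \rs}M_{ik}\varphi_k\Bigr), \]
which are nonnegative by the super-solution inequality and the nonnegativity of $M$ and $\varphi$. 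Summing the row one finds
\[ \sum_j \frac{\partial \tilde f^{(\balpha)}_i}{\partial w_j} = 1 - \varphi_i^{-1} \;\leq\; 1-K^{-1} = \frac{K-1}{K} = \lambda. \]
Hence on each linear piece the Jacobian is nonnegative and substochastic with total mass at most $\lambda$, which by integrating along segments across the polyhedral pieces implies that $\tilde f^{(\balpha)}$ is globally order-preserving and $\lambda$-contracting in the sup-norm.

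Finally, since $f=\min_\balpha f^{(\balpha)}$ and $\linear{\varphi}{\cdot}$ is an affine reparameterization composed with coordinatewise multiplication by the positive weights $\varphi^{-1}$, it commutes with coordinatewise minima, so $\tilde f = \min_\balpha \tilde f^{(\balpha)}$; the infimum of order-preserving $\lambda$-contractions is again an order-preserving $\lambda$-contraction, concluding the proof. I expect the main obstacle to be the super-solution inequality: making it rigorous relies on the rectangularity of $\MC$ to cast $\varphi$ as the value of a proper SSP MDP, so that the maximum over stationary mean return times agrees with the overall optimum and satisfies the one-step DP equation.
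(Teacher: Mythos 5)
Your proof is correct and takes essentially the same route as the paper's: your explicit Jacobian computation of $\tilde f^{(\balpha)}$ (nonnegative entries, row sums $1-\varphi_i^{-1}\leq\lambda$) is exactly the paper's construction of the modified matrix $M_{(\rs,\varphi)}$ in Lemma~\ref{lemma-equiv} and Corollary~\ref{coro-equiv}, and your super-solution inequality is the paper's identity $\varphi=1+\max_{M\in\MC}M_{(\rs)}\varphi$, likewise obtained from the rectangularity of $\MC$ and the interpretation of $\varphi$ as the value of a one-player first-passage problem. The only cosmetic difference is that you invoke standard stochastic-shortest-path theory to justify that $\varphi$ solves the Bellman equation, where the paper routes the same fact through its nonlinear Perron--Frobenius results (the spectral radius of $\bar f$ being strictly less than one).
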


\begin{coro}
Under the assumptions of Theorem~\ref{general-chang-mean},
Assertions~\ref{convergence-2},\ref{convergence-3},\ref{convergence-5},
and \ref{convergence-6}
of Proposition~\ref{policybase-increasing-convergence} hold
for Algorithm~\ref{policymeanbase} instead of Algorithm~\ref{policybase},
with $v^\ki$ replaced by $\eta^\ki+\varphi^{-1} v^\ki$,
and $v$ replaced by $\eta+\varphi^{-1} v$.
\end{coro}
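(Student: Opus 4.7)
The plan is to reduce the mean-payoff iteration to an ordinary (discounted-style) iteration via the affine change of variables $L_\varphi$, and then to invoke Proposition~\ref{policybase-increasing-convergence} for that reduction. The three ingredients are all stated above: Theorem~\ref{general-chang-mean} turns $f$ and $f^{(\balpha)}$ into sup-norm contractions $\tilde f$ and $\tilde f^{(\balpha)}$; Proposition~\ref{prop-general-chang-mean} identifies admissible runs of Algorithm~\ref{policymeanbase} on the original data with admissible runs of Algorithm~\ref{policybase} on the transformed data; and Proposition~\ref{policybase-increasing-convergence} gives the four conclusions we want for the transformed run.

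First I would check that $\PA$, $\tilde f$ and $\tilde f^{(\balpha)}$ form a valid input to Algorithm~\ref{policybase} and satisfy the hypotheses of Proposition~\ref{policybase-increasing-convergence}. The contraction property with factor $\lambda=(K-1)/K<1$ is given by Theorem~\ref{general-chang-mean}. The minimization identity $\tilde f=\min_{\balpha\in \PA}\tilde f^{(\balpha)}$ follows from the definition of $\linear{\varphi}{\cdot}$: since $\varphi$ has strictly positive coordinates, the map $w\mapsto \varphi^{-1}(w_\rs(\varphi-1)+f(\varphi(w-w_\rs)))$ is coordinatewise monotone in $f$, so $L_\varphi$ commutes with the pointwise $\min$ over $\balpha$. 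The rectangular structure from Section~\ref{sec-not} is preserved for the same reason.

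Next I would apply Proposition~\ref{policybase-increasing-convergence} to this transformed input. Its Assertions~\ref{convergence-2}--\ref{convergence-6} give, respectively, that Algorithm~\ref{policybase} is well defined on $(\PA,\tilde f,\tilde f^{(\balpha)})$, that the produced sequence $\tilde v^{\ki}$ is nonincreasing and converges to the unique fixed point $\tilde v$ of $\tilde f$, that the policy sequence $\balpha^{\ki}$ never revisits the same policy, and that the algorithm terminates after finitely many steps. Finally I would translate these statements back to the mean-payoff side using the bijection of Proposition~\ref{prop-general-chang-mean}: setting $\tilde v^{\ki}=\eta^{\ki}+\varphi^{-1}v^{\ki}$ and $\tilde v=\eta+\varphi^{-1}v$, existence of the additive eigenpair at each step is inherited from existence of $\tilde v^{\ki}$; monotone convergence $\tilde v^{\ki}\downarrow \tilde v$ reads exactly as $\eta^{\ki}+\varphi^{-1}v^{\ki}\downarrow \eta+\varphi^{-1}v$; and the non-revisiting and termination statements involve only the policy sequence $\balpha^{\ki}$, which is identical in both algorithms.

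The only point that requires care is the normalization $v^{\ki}_\rs=0$ imposed in Algorithm~\ref{policymeanbase}: one must verify that it is exactly this normalization that turns $L_\varphi$ into an affine isomorphism between $\R\times \{v\in\RX : v_\rs=0\}$ and $\RX$, so that uniqueness of the fixed point $\tilde v$ of $\tilde f$ is equivalent to uniqueness of the additive eigenpair $(\eta,v)$ of $f$ with $v_\rs=0$. This is straightforward from the formulas $\eta=\tilde v_\rs$ and $v=\varphi(\tilde v-\tilde v_\rs)$ already recorded before Proposition~\ref{prop-general-chang-mean}, and is the only nontrivial bookkeeping in an otherwise mechanical deduction. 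I do not foresee any substantial obstacle beyond this verification.
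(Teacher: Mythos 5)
Your proof is correct and follows exactly the route the paper intends: Theorem~\ref{general-chang-mean} supplies the contraction of the transformed maps, Proposition~\ref{prop-general-chang-mean} identifies the runs of Algorithm~\ref{policymeanbase} with runs of Algorithm~\ref{policybase} on the transformed input, and Proposition~\ref{policybase-increasing-convergence} then yields the four assertions, which translate back through the affine isomorphism $L_\varphi$. The paper states this corollary without a separate proof precisely because it is this mechanical combination; your extra checks (that $\linear{\varphi}{\cdot}$ preserves the pointwise minimum and that the normalization $v_\rs=0$ makes $L_\varphi$ a bijection) are the right bookkeeping and are already covered by Proposition~\ref{prop-general-chang-mean}.
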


Applying Theorem~\ref{general-chang-mean}, 
Proposition~\ref{prop-general-chang-mean}, and 
Theorem~\ref{hansen-improved}, we obtain:
\begin{coro}\label{bound-for-games}
Under the assumptions of Theorem~\ref{general-chang-mean},
the policy iteration  algorithm~\ref{policymeanbase}
stops after at most
$\ki_{\max}$ iterations, where $\ki_{\max}:= (m_1-n)  (1+\lfloor \frac{\log(K)}{\log(K/(K-1))} \rfloor ) = {\mathcal O} ((m_1-n) K \log K )$,
$K=\max_{i\in\X}\TC_{i\rs}$,
and $m_1$ is the cardinality of $\SA$.
\end{coro}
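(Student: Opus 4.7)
The plan is to reduce the mean-payoff iteration to the discounted setting of Theorem~\ref{hansen-improved} through the affine change of variables $L_\varphi$ introduced in Section~\ref{sec-mean-payoff}. First I would apply Theorem~\ref{general-chang-mean} with the choice $\varphi_i=\TC_{i\rs}$ to obtain that the transformed maps $\tilde f:=\linear{\varphi}{f}$ and $\tilde f^{(\balpha)}:=\linear{\varphi}{f^{(\balpha)}}$ are order-preserving and sup-norm contractions with common factor $\lambda=(K-1)/K<1$.

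Next, by Proposition~\ref{prop-general-chang-mean}, the admissible sequences of first-player policies generated by Algorithm~\ref{policymeanbase} on the original input $(\PA,f,f^{(\balpha)})$ with renewal state $\rs$ coincide with those generated by Algorithm~\ref{policybase} on the transformed input $(\PA,\tilde f,\tilde f^{(\balpha)})$, through the correspondence $\tilde v^\ki=\eta^\ki+\varphi^{-1}v^\ki$. Consequently, the number of external iterations performed before the stopping rule fires is the same in both runs, and it suffices to bound the iteration count of Algorithm~\ref{policybase} on the transformed input.

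It then remains to verify that Theorem~\ref{hansen-improved} applies to this input. The policy set $\PA$ is unchanged and still fits the framework of Section~\ref{sec-not}; the maps $\tilde f^{(\balpha)}$ are order-preserving sup-norm contractions with common factor $\lambda=(K-1)/K$ by the first step; and the rectangular form~\eqref{Fmin} is preserved, since coordinatewise
\[ [\tilde f^{(\balpha)}(w)]_i=\varphi_i^{-1}\bigl(w_\rs(\varphi_i-1)+f_i^{(\balpha)}(v)\bigr),\qquad v=\varphi(w-w_\rs), \]
so the $i$-th coordinate of $\tilde f^{(\balpha)}(w)$ still depends on $\balpha$ only through $\balpha_i$ and is still a maximum over the unchanged second-player action set $\B_i$. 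In particular the combinatorial parameter $m_1=|\SA|$ is unaffected by the transformation.

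Applying Theorem~\ref{hansen-improved} then yields $\ki_{\max}=(m_1-n)(1+\lfloor \log(1-\lambda)/\log(\lambda)\rfloor)$. With $1-\lambda=1/K$ and $\lambda=(K-1)/K$, one has $\log(1-\lambda)=-\log K$ and $\log(\lambda)=-\log(K/(K-1))$, so the ratio equals $\log(K)/\log(K/(K-1))$ as claimed. The asymptotic $\mathcal{O}\bigl((m_1-n)K\log K\bigr)$ follows from $\log(K/(K-1))=-\log(1-1/K)\geq 1/K$ for $K\geq 1$. The only non-routine point is the verification that $\linear{\varphi}{\cdot}$ preserves the max-of-affine rectangular structure underlying~\eqref{Fmin} together with its parameter $m_1$; once this is observed, the corollary is a direct assembly of Theorem~\ref{general-chang-mean}, Proposition~\ref{prop-general-chang-mean}, and Theorem~\ref{hansen-improved}.
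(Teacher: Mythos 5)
Your proof is correct and follows exactly the route the paper intends: the corollary is stated there as a direct assembly of Theorem~\ref{general-chang-mean}, Proposition~\ref{prop-general-chang-mean}, and Theorem~\ref{hansen-improved}, with the same substitution $\lambda=(K-1)/K$. Your explicit check that $\linear{\varphi}{\cdot}$ preserves the rectangular max-of-affine form~\eqref{Fmin} (and hence the parameter $m_1$) is a detail the paper leaves implicit, and it is verified correctly.
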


\begin{coro}
Let $\PA$ and $f$ be  given as in Section~\ref{sec-not}, with $\gamma\equiv 1$.
Assume that every matrix $M^{(\balpha\bbeta)}$, $\balpha\in\PA$, $\bbeta\in\PB$, 
 has a unique final class, and there is a 
state $\rs\in \X$ which is common to each of these classes, so that 
\[ \bar{\TC}_{i\rs}:=\max_{\balpha\in\PA,\bbeta\in\PB}
\TC_{i\rs}(M^{(\balpha\bbeta)}) <+\infty \quad \forall i \in \X\enspace. \]
Then, the conclusion of Corollary~\ref{bound-for-games} holds
for the Hoffman and Karp policy iteration algorithm
for 2-player mean-payoff games, Algorithm~\ref{policy2mean}
(instead of Algorithm~\ref{policymeanbase})
with $K=\max_{i\in\X}\bar{\TC}_{i\rs}$.
\end{coro}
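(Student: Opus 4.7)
The plan is to derive the bound for Algorithm~\ref{policy2mean} by invoking Corollary~\ref{bound-for-games} at each of the two nested instances of Algorithm~\ref{policymeanbase} out of which Algorithm~\ref{policy2mean} is built. Thus it suffices to verify, at both levels, the hypotheses of Theorem~\ref{general-chang-mean}.

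First I would address the outer iteration, which is Algorithm~\ref{policymeanbase} with input $\PA$, $f$, and the maps $f^{(\balpha)}$ given by~\eqref{fmax}. Since $\gamma\equiv 1$, each $f^{(\balpha)}$ is a coordinate-wise maximum of finitely many affine maps with Markov matrices $M^{(\balpha\bbeta)}$, and is therefore order preserving and polyhedral. The key step is to identify $\MC(\balpha)=\rec{\rgrad{f^{(\balpha)}}}$ explicitly. By the rectangularity of $\PB$ noted in Section~\ref{sec-not} (the value $\bbeta(i,a)$ may be chosen independently at each $(i,a)$), the set $\{M^{(\balpha\bbeta)}:\bbeta\in\PB\}$ is already closed under row-wise recombinations, so $\MC(\balpha)$ coincides with $\{M^{(\balpha\bbeta)}:\bbeta\in\PB\}$. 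Taking the union over $\balpha\in\PA$, the family $\MC$ is exactly $\{M^{(\balpha\bbeta)}:\balpha\in\PA,\bbeta\in\PB\}$. The assumption of the present corollary asserts precisely that each $M^{(\balpha\bbeta)}$ has a unique final class containing $\rs$, and that $\bar\TC_{i\rs}<+\infty$ for all $i$, which is the hypothesis of Theorem~\ref{general-chang-mean} with $K=\max_i \bar\TC_{i\rs}$. Corollary~\ref{bound-for-games} then yields the claimed bound $\ki_{\max}$ on the number of outer iterations.

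Second, for each fixed $\balpha^\ki$, I would apply the same reasoning to the inner iteration, which is Algorithm~\ref{policymeanbase} (in its symmetric max version, as permitted after Algorithm~\ref{policybase}) applied to $f^{(\balpha^\ki)}$ and the affine maps $f^{(\balpha^\ki\bbeta)}$, $\bbeta\in\PB$. Each $f^{(\balpha^\ki\bbeta)}$ is affine with matrix $M^{(\balpha^\ki\bbeta)}$, so the relevant $\MC$ is the subfamily $\{M^{(\balpha^\ki\bbeta)}:\bbeta\in\PB\}$ of the outer $\MC$. The assumption of the corollary passes to this subfamily with the same constant $K$, so Corollary~\ref{bound-for-games} bounds the inner iterations by the same formula.

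The main obstacle is conceptual rather than computational: one must correctly identify $\rec{\rgrad{f^{(\balpha)}}}$ with $\{M^{(\balpha\bbeta)}:\bbeta\in\PB\}$ in order to translate the hypothesis on $\bar\TC_{i\rs}(M^{(\balpha\bbeta)})$ into the hypothesis of Theorem~\ref{general-chang-mean}, which involves the a priori larger rectangular hull. This translation rests squarely on the rectangularity of $\PB$ recalled in Section~\ref{sec-not}; once it is in place, the result is a direct application of Corollary~\ref{bound-for-games} at each nesting level, and no new analytic ingredient is required.
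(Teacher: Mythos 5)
Your argument is correct and is exactly the route the paper intends: the paper declares the passage from Algorithm~\ref{policymeanbase} to Algorithm~\ref{policy2mean} to be immediate, and your identification of $\MC=\cup_{\balpha\in\PA}\rec{\rgrad{f^{(\balpha)}}}$ with the family $\{M^{(\balpha\bbeta)}:\balpha\in\PA,\,\bbeta\in\PB\}$ via the rectangularity recalled in Section~\ref{sec-not} is precisely the step that makes it so, at both nesting levels. (Strictly one only needs the inclusion $\MC\subseteq\{M^{(\balpha\bbeta)}\}$ rather than equality, which your rectangular-hull argument already yields; the return-time hypothesis then transfers to every matrix of $\MC$, and the bound of Corollary~\ref{bound-for-games} is monotone in $K$, so the stated $K=\max_{i\in\X}\bar{\TC}_{i\rs}$ is valid.)
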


\begin{coro}
Let $K\in [1,+\infty)$ be fixed. Then,
the Hoffman and Karp policy iteration algorithm
solves in strongly polynomial time the instances of zero-sum 2-player
stochastic mean-payoff games with perfect information having
a distinguished state to which the mean return time
is bounded by $K$ for all choices of
policies of both players.
\end{coro}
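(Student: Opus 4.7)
The plan is to deduce this final corollary directly from the previous one, by checking that each ingredient of the Hoffman--Karp policy iteration algorithm, when restricted to instances with first mean return time bounded by $K$, can be executed in strongly polynomial time, and that the total number of iterations is polynomial in the size of the input once $K$ is held fixed.

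First, I would verify that the instances considered fall into the scope of the previous corollary. Under the hypothesis that every transition matrix $M^{(\balpha\bbeta)}$ has a unique final class containing a common distinguished state $\rs$ with $\TC_{i\rs}(M^{(\balpha\bbeta)})\le K$ for all $i,\balpha,\bbeta$, we have $\bar{\TC}_{i\rs}\le K$ for every $i\in\X$. The previous corollary then gives that the number of external iterations of Algorithm~\ref{policy2mean} is at most
\[
(m_1-n)\Bigl(1+\Bigl\lfloor \tfrac{\log K}{\log(K/(K-1))}\Bigr\rfloor\Bigr) = \mathcal{O}\bigl((m_1-n)K\log K\bigr),
\]
which, for fixed $K$, is linear in $m_1-n$ and thus polynomial in the size of the input.

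Next, I would argue that each external iteration is itself executed in strongly polynomial time. The nested application of Algorithm~\ref{policymeanbase} to the one-player game with policy $\balpha^\ki$ fixed satisfies the same hypothesis (the restricted transition matrices are a subfamily of the $M^{(\balpha\bbeta)}$, hence still have $\rs$ as a common renewal state with return time $\le K$), so by the same previous corollary applied in the one-player setting, the number of inner iterations is also $\mathcal{O}((m_2-n)K\log K)$, where $m_2$ is the total number of second-player state-action pairs. Each inner iteration requires (i) computing the additive eigenvector and eigenvalue of $f^{(\balpha^\ki\bbeta^{\ki,\kj})}$ with the normalization $v_\rs=0$, which reduces to solving a single linear system of size $n$ with rational data (the entries of $M^{(\balpha\bbeta)}$ and $r^{(\balpha\bbeta)}$), doable by Gaussian elimination in strongly polynomial time; and (ii) performing a policy improvement step, which is a componentwise maximum over the finitely many actions of the second player and costs $\mathcal{O}(m_2)$ arithmetic operations. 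The external iteration likewise performs a componentwise minimum over the first player's actions and costs $\mathcal{O}(m_1)$ operations.

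Putting these together, the total number of arithmetic operations is polynomial in $n$, $m_1$, $m_2$ and in $K$, and since $K$ is held fixed it depends only polynomially on the input size. The anticipated obstacle is not a conceptual one but rather the bookkeeping check that all rational numbers arising in Gaussian elimination and in successive iterates stay of polynomially bounded bit length; this is standard for strongly polynomial solvability of linear systems with rational data, and no parameter-dependent blow-up occurs because the termination bound does not depend on the magnitude of the payoffs $r^{ab}_i$ (only on the combinatorial data through $m_1-n$ and the fixed constant $K$). This completes the reduction and yields strong polynomiality of Algorithm~\ref{policy2mean} on the given class of instances.
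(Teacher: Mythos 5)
Your proposal is correct and follows the same route the paper intends: this corollary is stated without a separate proof, as an immediate consequence of the preceding corollary, which bounds the number of outer (and, applied to the nested one\mbox{-}player instances, inner) policy iterations by $\mathcal{O}((m_1-n)K\log K)$, i.e.\ linearly in the input size once $K$ is fixed. Your additional verification that each iteration amounts to solving a rational linear system (the equation $\eta+v=Mv+r$ with $v_\rs=0$, strongly polynomial via Gaussian elimination) plus componentwise $\operatorname{argmin}/\operatorname{argmax}$ steps supplies exactly the bookkeeping the paper leaves implicit.
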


\section{Proof of the preliminary results of Section~\ref{sec-policy}}
The following proof is similar to the proofs of
the same properties for Algorithm~\ref{policy2} that can be found
for instance in~\cite{Bertsekas87}.
\begin{proof}[Proof of Proposition~\ref{policybase-increasing-convergence}]
Let $\PA$, $f$ and $f^{(\balpha)}$ be as in the proposition.
From~\eqref{fmin}, $f$ is order preserving and
contracting in the sup-norm with contraction factor $\lambda$.

Hence the maps $f^{(\balpha)}$ and $f$ have a unique fixed point, 
which implies that
Step~\ref{value} of Algorithm~\ref{policybase} is well defined.

Let $(v^{\ki})_{\ki\geq 1}$ be the sequence of Algorithm~\ref{policybase}.
We have $v^{\ki}=f^{(\balpha^{\ki})}(v^{\ki})\geq f(v^{\ki})=f^{(\balpha^{\ki+1})}(v^{\ki})$.
In particular,  $v^{\ki}\geq f^{(\balpha^{\ki+1})}(v^{\ki})$, which implies that
the sequence $(f^{(\balpha^{\ki+1})})^k(v^{\ki})$ is nonincreasing.
By the fixed point theorem for the contracting map $f^{(\balpha^{\ki+1})}$, 
the former sequence converges towards the unique fixed point,
which by definition is $v^{\ki+1}$.
This implies in particular that  $v^{\ki}\geq f^{(\balpha^{\ki+1})}(v^{\ki})\geq 
v^{\ki+1}$, so that the sequence  $(v^{\ki})_{\ki\geq 1}$ is nonincreasing.

Moreover, from the above equations,
we deduce that $v^{\ki}\geq f(v^{\ki})\geq v^{\ki+1}$.
In particular, the sequence $f^k(v^{\ki})$  is nonincreasing.
Again, by the fixed point theorem for the contracting map $f$,
the former sequence converges towards the unique fixed point $v$ of $f$,
hence  $v^{\ki}\geq v$ for all $\ki$.
Since the sequence $(v^{\ki})_{\ki\geq 1}$ is nonincreasing
and lower bounded by $v$, it converges towards
some vector $w\geq v$.
Then, from the above equations, we also get that 
$v^{\ki}\geq f(v^{\ki})\geq v^{\ki+1} \geq v$,
for all $\ki$, passing to the limit and using the continuity of $f$,
we deduce that $w=f(w)$, and since $f$ has a unique fixed point, we deduce that
$w=v$.

Assume by contradiction that the sequence $\balpha^{\ki}$ 
visits twice the same policy. This means that
 $\balpha^{\ki'}=\balpha^{\ki}$ for some $\ki'>\ki\geq 0$.
Since the map $f^{(\balpha^{\ki})}=f^{(\balpha^{\ki'})}$ has a unique fixed point,
we get that $v^{\ki}=v^{\ki'}$.
Since we already proved that the sequence
$(v^{\ki})_{\ki\geq 0}$ is nonincreasing, we obtain
$v^{\ki}\geq v^{\ki+1}\geq v^{\ki'}$. This implies that $v^\ki=v^{\ki+1}$,
hence $v^\ki=f(v^\ki)$,
so that, by definition, the algorithm necessarily stops at iteration $\ki$
if it did not stopped before, hence the iteration $\ki'$ does not
occur, and $\balpha^{\ki'}$ is computed only if
$\ki'=\ki+1$ and so the algorithm cannot visits twice the same policy,
except when the stopping condition is verified.

This implies that Algorithm~\ref{policybase} stops
after at most a number of iterations equal to the cardinality of
the set $\PA$.
\end{proof}

\begin{proof}[Proof of Corollary~\ref{policybase-contraction}]
From $v^{\ki}\geq f(v^{\ki})\geq v^{\ki+1} \geq v$,
we get that $\|v^{\ki+1}-v\|\leq \|f(v^{\ki})-v\|$ and since
$f$ is contracting with factor $\lambda$, we deduce that
$\|v^{\ki+1}-v\|\leq \lambda \|v^{\ki}-v\|$.
\end{proof}

\section{Proof of Theorem~\ref{hansen-improved}}
Let $v$ denote the unique fixed point of $f$,
and for all $\balpha\in\PA$, denote by 
$R^{(\balpha)}= f^{(\balpha)}(v)-v$ the residual induced by $v$ on the
fixed point equation of $f^{(\balpha)}$.
By~\eqref{fmin}, we have $\min_{\balpha\in \PA} R^{(\balpha)}=0$,
so $R^{(\balpha)}\geq 0$ for all $\balpha\in\PA$.
Moreover, $\balpha$ is an optimal policy of the game if and
only if $R^{(\balpha)}=0$, or equivalently $\|R^{(\balpha)}\|=0$.
Finally, by~\eqref{Fmin},
 $R^{(\balpha)}_\sx= R^{\balpha_\sx}_\sx$ for all $\sx\in\X$, where
$R^a_\sx= \fvia{F}{v}{\sx}{a} -v_\sx$ for all $\sx\in\X$ and
$a\in \A_\sx$ plays the role of a new reward such that the 
value of the dynamic programming equation is identically equal to zero.

Let $v^\ki$ and $\balpha^\ki$ be the sequences of values and 
policies constructed in Algorithm~\ref{policybase}.
Since $v^\ki\geq v$, $v^\ki=f^{(\balpha^{\ki})}(v^\ki)$,
and $f^{(\balpha^{\ki})}$ is order preserving, we get that
$v^\ki\geq f^{(\balpha^{\ki})}(v)\geq f(v)=v$,
hence $0\leq R^{(\balpha^{\ki})} \leq  v^\ki-v$ and taking the supremum
over all coordinates (or states), we get that
$\|R^{(\balpha^{\ki})}\| \leq  \| v^\ki-v\|$.
Now, since $v^\ki$ is the unique fixed point of the $\lambda$-contracting
map $f^{(\balpha^{\ki})}$, we get that
$\|v^\ki-f^{(\balpha^{\ki})}(v)\|\leq \lambda \|v^\ki-v\|$.
Then, $\| v^\ki-v\|\leq \|v^\ki-f^{(\balpha^{\ki})}(v)\|+\|R^{(\balpha^{\ki})}\|
\leq \lambda \|v^\ki-v\|+\|R^{(\balpha^{\ki})}\|$.
From all the above inequalities, we get that
\[ \|R^{(\balpha^{\ki})}\| \leq  \| v^\ki-v\|\leq \frac{1}{1-\lambda }
\|R^{(\balpha^{\ki})}\| \enspace.\]
Combining these inequalities with the contraction
of policy iterations shown in 
Corollary~\ref{policybase-contraction}
($\|v^{\ki+1}-v\|\leq \lambda  \|v^{\ki}-v\|$), we obtain that
for all $t\geq \ki+ p$, 
\[ \|R^{(\balpha^{t})}\| \leq  \mu \|R^{(\balpha^{\ki})}\| ,
\quad \text{with}\quad\mu=\frac{1}{1-\lambda} \lambda^p \enspace.\]
Moreover when $p= 1+\lfloor \log(1-\lambda)/\log(\lambda)\rfloor$
(which is the least integer such that  $p>\log(1-\lambda)/\log(\lambda)$),
we have $\mu<1$.

For all $\balpha\in\PA$, let us denote by 
$\graph(\balpha)$ the graph of
$\balpha$: $\graph(\balpha)=\set{(\sx,\balpha_\sx)}{\sx\in \X }$.
Since $R^{(\balpha)}_\sx= R^{\balpha_\sx}_\sx$ for all $\sx\in\X$, we get that
$\|R^{(\balpha)}\|=\max_{(i,a)\in \graph(\balpha)} R_i^a$.
Assume $\balpha^{\ki}$ is not optimal, then $\|R^{(\balpha^{\ki})}\|>0$
and let $(i,a)$ realizes the maximum of $R_i^a$ on $\graph(\balpha^{\ki})$.
If $t\geq \ki+ p$, with $p$ as before, and $(i,a) \in \graph(\balpha^{t})$,
we get that $R_i^a\leq  \|R^{(\balpha^{t})}\| \leq  \mu \|R^{(\balpha^{\ki})}\| 
=\mu R_i^a$ with $\mu<1$ and $R_i^a>0$, which is impossible.
This shows that
$(i,a)\not\in \graph(\balpha^{t})$,
 hence $\graph(\balpha^{t})\subset \SA\setminus\{(i,a)\}$,
for all $t\geq  \ki+ p$.
Let us construct a sequence $\SA_\ki$ of subsets of $\SA$,
equal to the empty set for all $\ki<p$, and such that for all $\ki\geq p$,
 $\SA_{\ki}$ is the union of $\SA_{\ki-1}$ with 
the set of couples $(i,a)$ realizing the maximum of $R_i^a$ 
on $\graph(\balpha^{\ki-p})$.
We get that $\graph(\balpha^{t})\subset \SA\setminus \SA_{t}$,
for all $t\geq  0$ and that for all $\ki\geq p$, there 
exist $(i,a)\in \SA_{\ki}\setminus\SA_{\ki-p}$,
as long as Algorithm~\ref{policybase} did not stop,
so that the cardinality of $\SA_{\ki}$ increases at least by one 
after each group of $p$ iterations.
Hence, $\SA\setminus \SA_{p(m_1-n)}$  has at most $n$ elements, and since,
for all $t\geq p(m_1-n)$, $\graph(\balpha^{t})\subset \SA\setminus \SA_{p(m_1-n)}$
and $\graph(\balpha^{t})$ has exactly $n$ elements,
we deduce that, if the algorithm did not stop before iteration number $t$,
there is only one choice for $\graph(\balpha^{t})$ with $t\geq p(m_1-n)$,
hence $\balpha^{t}=\balpha^{t+1}$, 
and the algorithm stops at iteration number $t$.

\section{Spectral radius notions and the results of Section~\ref{sec-discount}}
\label{sec-techniques}

Let $C$ be a closed convex cone of $\R^n$,
let $\inte{C}$ denote its interior, and
let $h$ be a nonlinear continuous positively homogeneous map from $C$ to
itself ($h(\lambda v)=\lambda h(v)$ for all $\lambda>0$ and $v\in C$).
The following definitions are taken from~\cite{Nuss-Mallet}:
\begin{itemize}
\item $v$ is an eigenvector of $h$ in $C$, and $\lambda$ is an eigenvalue 
associated to $v$, if $h(v)=\lambda v$.
\item The \new{cone eigenvalue spectral radius} of $h$
 is the supremum of its eigenvalues in $C$:
\[ \eigenvalspr{C}(h):= \sup\set{\lambda\geq 0}{
\exists v\in C \backslash \{0\}\;\text{such that}\;
h(v)=\lambda v}\enspace .\]
\item The \new{Collatz-{W}ielandt number} of $h$ is defined as:
\[ \cw{C}(h):=\inf\set{\lambda>0}{\exists v\in \inte{C}\;\text{such that}\;
h(v)\leq \lambda v}\enspace .\]
\item The \new{Bonsall's spectral radius} of $h$ is defined as:
\[ \bonsall{C}(h):=\inf_{k\geq 1} \|h^k\|_C^{1/k},
\quad \text{with}
\quad \|h\|_C:=\sup_{x\in C,\; \|x\|=1} \|h(x)\| \enspace , \]
for any given norm $\|\cdot\|$ on $\R^n$.
\end{itemize}

The equality $ \eigenvalspr{\R_+}(h)=\cw{\R_+}(h)$
in the following result was established by Nussbaum~\cite[Theorem~3.1]{LAA}.
The last equality is done in~\cite{agn} in a more general
infinite dimensional context, together with the first one.

\begin{theorem}[\protect{\cite[Theorem~3.1]{LAA}}, and~\cite{agn}]
\label{theorem-nussbaum}
For a continuous, positively homogeneous,
order preserving selfmap $h$ of $C=\R_+^n$, all the above
spectral radius notions of $h$ coincide:
\[ \eigenvalspr{\R_+}(h)=\cw{\R_+}(h)= \bonsall{\R_+}(h) \enspace. \]
We denote by $\eigenrad{h}$ this constant.
\end{theorem}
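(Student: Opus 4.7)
The plan is to prove $\eigenvalspr{\R_+}(h) \leq \cw{\R_+}(h) \leq \bonsall{\R_+}(h) \leq \eigenvalspr{\R_+}(h)$ in three steps, the first two being short analytic comparisons and the third, the main obstacle, requiring a Krein--Rutman-style fixed point argument. For \emph{Step 1} ($\eigenvalspr{\R_+} \leq \cw{\R_+}$), I would take $v \in \R_+^n \setminus \{0\}$ with $h(v) = \lambda v$ and $w \in \inte{\R_+^n}$ with $h(w) \leq \mu w$; since $w$ has strictly positive entries, $t^\star := \inf\{t > 0 : v \leq t w\}$ is finite, positive, and achieves $v \leq t^\star w$. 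Applying $h$ and using order preservation together with positive homogeneity, $\lambda v \leq t^\star h(w) \leq t^\star \mu w$, so $v \leq (t^\star \mu / \lambda) w$; if $\lambda > \mu$, this contradicts minimality of $t^\star$, yielding $\lambda \leq \mu$ and hence $\eigenvalspr{\R_+}(h) \leq \cw{\R_+}(h)$ after taking suprema and infima.

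For \emph{Step 2} ($\cw{\R_+} \leq \bonsall{\R_+}$), I would work with the sup-norm (admissible since $\bonsall{\R_+}$ is norm-independent in finite dimension) and set $e := (1,\dots,1)$, so that $x \leq \|x\|_\infty\, e$ for every $x \in \R_+^n$. Given $\mu > \bonsall{\R_+}(h)$, pick $k$ with $\|h^k\|_{\R_+^n} < \mu^k$, which yields $h^k(e) \leq \mu^k e$. Setting $w := \sum_{j=0}^{k-1} \mu^{-j} h^j(e) \geq e$, which lies in $\inte{\R_+^n}$, a telescoping computation gives $h(w) = \mu w - \mu e + \mu^{-(k-1)} h^k(e) \leq \mu w$, so $\cw{\R_+}(h) \leq \mu$; letting $\mu \downarrow \bonsall{\R_+}(h)$ finishes the step.

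\emph{Step 3} ($\bonsall{\R_+} \leq \eigenvalspr{\R_+}$) is the main obstacle, since it requires producing an eigenvector with eigenvalue at least $\bonsall{\R_+}(h)$ without any irreducibility hypothesis. I would perturb $h$ by $h_\varepsilon(x) := h(x) + \varepsilon \|x\|_1 e$, which inherits continuity, positive homogeneity, and order preservation, and additionally sends $\R_+^n \setminus \{0\}$ into $\inte{\R_+^n}$. The normalization $x \mapsto h_\varepsilon(x)/\|h_\varepsilon(x)\|_1$ is then a continuous self-map of the compact convex simplex $\Delta := \{x \in \R_+^n : \|x\|_1 = 1\}$, so Brouwer's theorem supplies a fixed point $v_\varepsilon \in \Delta \cap \inte{\R_+^n}$ with $h_\varepsilon(v_\varepsilon) = \lambda_\varepsilon v_\varepsilon$. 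Because $v_\varepsilon$ is interior, constants $c, C > 0$ satisfy $c\, e \leq v_\varepsilon \leq C\, e$, so iterating the eigenrelation together with order preservation gives $\|h_\varepsilon^k\|_{\R_+^n} \leq (C/c)\lambda_\varepsilon^k$, hence $\bonsall{\R_+}(h_\varepsilon) \leq \lambda_\varepsilon$; combined with $h_\varepsilon \geq h$ pointwise, which lifts to all iterates and yields $\bonsall{\R_+}(h_\varepsilon) \geq \bonsall{\R_+}(h)$, this produces $\lambda_\varepsilon \geq \bonsall{\R_+}(h)$ for every $\varepsilon > 0$.

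It remains to pass to the limit $\varepsilon \to 0$. The $\lambda_\varepsilon$ are uniformly bounded (by $\|h\|_{\R_+^n} + \varepsilon n$, say) and the $v_\varepsilon$ lie in the compact set $\Delta$, so a subsequence $(v_{\varepsilon_n}, \lambda_{\varepsilon_n})$ converges to some $(v, \lambda)$ with $v \in \Delta$. Passing to the limit in $h_{\varepsilon_n}(v_{\varepsilon_n}) = \lambda_{\varepsilon_n} v_{\varepsilon_n}$ and using continuity of $h$ yields $h(v) = \lambda v$ with $v \in \R_+^n \setminus \{0\}$ and $\lambda \geq \bonsall{\R_+}(h)$, whence $\eigenvalspr{\R_+}(h) \geq \bonsall{\R_+}(h)$, closing the chain.
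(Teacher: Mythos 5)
The paper does not actually prove this statement---it is imported from Nussbaum and from Akian--Gaubert--Nussbaum---so your attempt has to stand on its own. Your Step 1 is correct (the standard Collatz--Wielandt comparison), and your Step 3 is a genuinely nice and complete argument: the perturbation $h_\varepsilon(x)=h(x)+\varepsilon\|x\|_1 e$, Brouwer on the simplex, the two-sided comparison $c\,e\leq v_\varepsilon\leq C\,e$ giving $\bonsall{\R_+}(h_\varepsilon)\leq\lambda_\varepsilon$, the monotonicity $h_\varepsilon\geq h$ lifting to iterates, and the compactness passage to the limit are all sound, and they establish $\bonsall{\R_+}(h)\leq\eigenvalspr{\R_+}(h)$.

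The gap is Step 2. The ``telescoping computation'' $h(w)=\mu w-\mu e+\mu^{-(k-1)}h^k(e)$ with $w=\sum_{j=0}^{k-1}\mu^{-j}h^j(e)$ silently assumes that $h$ is additive: you are expanding $h\bigl(\sum_j\mu^{-j}h^j(e)\bigr)$ as $\sum_j\mu^{-j}h^{j+1}(e)$. An order-preserving, positively homogeneous map need be neither subadditive nor superadditive (e.g.\ $h(x)=\min(x_1,x_2)\,e$ is superadditive, $h(x)=\max(x_1,x_2)\,e$ is subadditive), and order preservation only yields the useless lower bound $h(w)\geq\mu^{-j}h^{j+1}(e)$ for each $j$. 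This matters for the global structure: the inequality $\cw{\R_+}\leq\bonsall{\R_+}$ is one of the two genuinely hard directions here (the easy comparison between these two quantities goes the other way, $\bonsall{\R_+}\leq\cw{\R_+}$, by iterating a super-eigenvector), and without it your chain only yields $\bonsall{\R_+}(h)\leq\eigenvalspr{\R_+}(h)\leq\cw{\R_+}(h)$, with no upper bound on $\cw{\R_+}(h)$.

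The good news is that your Step 3 already contains the repair. Each $v_\varepsilon\in\inte{\R_+^n}$ satisfies $h(v_\varepsilon)\leq h_\varepsilon(v_\varepsilon)=\lambda_\varepsilon v_\varepsilon$, so $\cw{\R_+}(h)\leq\lambda_\varepsilon$; moreover your own estimates show $\lambda_\varepsilon=\bonsall{\R_+}(h_\varepsilon)$ (you proved $\bonsall{\R_+}(h_\varepsilon)\leq\lambda_\varepsilon$, and the reverse holds because an eigenvector forces $\|h_\varepsilon^k\|\geq\lambda_\varepsilon^k\|v_\varepsilon\|$ up to normalization). It then suffices to check $\limsup_{\varepsilon\to0}\lambda_\varepsilon\leq\bonsall{\R_+}(h)$: for each fixed $k$, the maps $h_\varepsilon^k$ decrease pointwise to $h^k$ as $\varepsilon\downarrow0$, so by Dini's theorem $\|h_\varepsilon^k\|\to\|h^k\|$ on the compact unit sphere of the cone, whence $\limsup_\varepsilon\lambda_\varepsilon\leq\|h^k\|^{1/k}$ for every $k$, and taking the infimum over $k$ closes the chain. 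Replace Step 2 by this argument and the proof is complete.
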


The following result can be deduced easily from 
Theorem~\ref{theorem-nussbaum}.
It is also proved in an infinite dimensional context in~\cite{agn}.
\begin{prop}\label{prop-rhosup}
Let $\Pi$ be a set, and $h$ and $h_{\pi}$, $\pi\in \Pi$,  be continuous, 
positively homogeneous, order preserving selfmaps of $\R_+^n$.
Assume that for all $v\in\R_+^n$, 
$h(v)=\max_{\pi\in \Pi}  h_\pi(v)$, meaning that 
$h(v)\geq h_\pi(v)$ for all $\pi\in \Pi$, and that 
there exists $\pi\in \Pi$ such that  $h(v)=h_\pi(v)$.
Then
\[ \eigenrad{h}=\max_{\pi\in \Pi}  \eigenrad{h_\pi}\enspace. \]
\end{prop}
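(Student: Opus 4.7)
The plan is to prove the two inequalities $\eigenrad{h} \geq \max_{\pi \in \Pi} \eigenrad{h_\pi}$ and $\eigenrad{h} \leq \max_{\pi \in \Pi} \eigenrad{h_\pi}$ separately, invoking different characterizations of the spectral radius from Theorem~\ref{theorem-nussbaum}.

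For the lower bound I would fix $\pi \in \Pi$ and use Bonsall's formulation. The pointwise inequality $h \geq h_\pi$ together with the order-preservation of $h$ yields by induction $h^k(v) \geq h_\pi^k(v)$ for every $v \in \R_+^n$ and $k \geq 1$, via
\[
h^k(v) = h(h^{k-1}(v)) \geq h(h_\pi^{k-1}(v)) \geq h_\pi(h_\pi^{k-1}(v)),
\]
where the first inequality uses monotonicity of $h$ and the second uses $h \geq h_\pi$. Since the sup-norm is monotone on $\R_+^n$, the relation $h^k(v) \geq h_\pi^k(v) \geq 0$ implies $\|h^k(v)\| \geq \|h_\pi^k(v)\|$, and taking the supremum over $v$ in the intersection of $\R_+^n$ with the unit sphere gives $\|h^k\|_{\R_+^n} \geq \|h_\pi^k\|_{\R_+^n}$. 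Extracting $k$-th roots and then the infimum over $k$ yields $\bonsall{\R_+}(h) \geq \bonsall{\R_+}(h_\pi)$, which by Theorem~\ref{theorem-nussbaum} translates into $\eigenrad{h} \geq \eigenrad{h_\pi}$. Taking the maximum over $\pi \in \Pi$ completes this direction.

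For the upper bound I would use instead the cone eigenvalue spectral radius. Let $\lambda \geq 0$ be any eigenvalue of $h$ in $\R_+^n$, with an associated eigenvector $v \in \R_+^n \setminus \{0\}$. The crux is the attainment assumption in the hypothesis: it provides some $\pi \in \Pi$ (possibly depending on $v$) for which $h_\pi(v) = h(v) = \lambda v$, so $\lambda$ is also an eigenvalue of $h_\pi$. Hence
\[
\lambda \;\leq\; \eigenvalspr{\R_+}(h_\pi) \;=\; \eigenrad{h_\pi} \;\leq\; \max_{\pi \in \Pi} \eigenrad{h_\pi}.
\]
Passing to the supremum over all eigenvalues of $h$ and applying Theorem~\ref{theorem-nussbaum} once more yields $\eigenrad{h} = \eigenvalspr{\R_+}(h) \leq \max_{\pi \in \Pi} \eigenrad{h_\pi}$, as desired. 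Neither inequality presents a serious obstacle once Theorem~\ref{theorem-nussbaum} is at hand; the attainment half of the hypothesis is really what makes the upper bound so transparent, since without it one could only compare suprema of eigenvalues and would need to approximate.
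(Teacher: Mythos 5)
Your proof is correct. The paper gives no written argument for this proposition---it only remarks that it ``can be deduced easily from Theorem~\ref{theorem-nussbaum}'' and points to the infinite-dimensional treatment in the cited reference---and your two-sided deduction (Bonsall's radius and monotonicity of the iterates $h^k \geq h_\pi^k$ for the lower bound; the eigenvalue characterization together with the attainment half of the hypothesis for the upper bound) is exactly such a deduction, so it matches the intended route.
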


\begin{proof}[Proof of Theorem~\ref{general-scaling}]
Since $f$ is order preserving, so is $\scaled{\varphi}{f}$.
If $f$ is a polyhedral map such that
all the matrices $M\in\rgrad{f}$ 
satisfy $M\varphi\leq \lambda \varphi$,
then, all the matrices $M'\in\rgrad{\scaled{\varphi}{f}}$
satisfy $M'\un=\varphi^{-1}M \varphi\leq \lambda\un $,
where $\un$ is the vector with all coordinates equal to $1$.
Then, since $M'$ has also nonnegative coordinates, because
$\scaled{\varphi}{f}$ is order preserving, we get that $M'$ is
contracting in the sup-norm with contraction factor $\lambda$.
Then, using the polyhedral and continuity properties of $f$,
it is easy to see that $\scaled{\varphi}{f}$ is also 
contracting in the sup-norm with contraction factor $\lambda$.

Let us show the above property for all maps $f^{(\balpha)}$.
For this, consider the self-map $\bar{f}$ of $\RX$ 
given by:
\begin{equation}\label{defbarf}
 \bar{f}(v):=\sup_{M\in\MC} (M v )
\enspace,\end{equation}
where $\MC$ is as in the theorem.
Since all the matrices involved in the previous formula 
have nonnegative entries, the corresponding 
self-maps of $\RX$ are order-preserving.
Since $\PA$ is finite 
and the maps $f^{(\balpha)}$ are polyhedral, 
the set $\MC$ is finite.
Since in addition $\PA$ is as in Section~\ref{sec-not},
the set $\MC$ is the Cartesian product of the sets of its rows, hence
the supremum in~\eqref{defbarf} is a maximum.
Then, applying Proposition~\ref{prop-rhosup},
we get that 
\[ \eigenrad{ \bar{f}}=\max_{M\in\MC} \eigenrad{M}= \omega.\]
In particular the maximum is attained, hence $<1$.
Now, from Theorem~\ref{theorem-nussbaum}, we get that
$\omega=\eigenrad{ \bar{f}}=\cw{\R_+}( \bar{f})$, 
hence for all $\lambda>\omega$, there exists $\varphi\in\RX$ with
positive coefficients such that $\bar{f}(\varphi)\leq \lambda \varphi$.
This implies that all the matrices $M\in\rgrad{f^{(\balpha)}}$
satisfy $M\varphi\leq \lambda \varphi$,
which by the above arguments implies that  $\scaled{\varphi}{f^\balpha}$ is
contracting in the sup-norm with contraction factor $\lambda$.
\end{proof}

\begin{proof}[Proof of Proposition~\ref{prop-general-scaling}]
By definition of $\scaled{\varphi}{f}$, we have
$v=f(v)$ if and only if $w=\scaled{\varphi}{f}(w)$
for $w=\varphi^{-1} v$.
Moreover, the transformation of maps $f$,
$\scaled{\varphi}{f}(w)=\varphi^{-1}f(\varphi w)$
preserves the order on the maps $f$.
Hence if $v^\ki$ and $\balpha^\ki$ are respectively
sequences  of fixed points, and policies of
Algorithm~\ref{policybase} for $f$ and $f^{(\balpha)}$,
then $w^\ki=\varphi^{-1} v^\ki$ and $\balpha^\ki$ are respectively
sequences  of fixed points and policies of
Algorithm~\ref{policybase} for $\scaled{\varphi}{f}$ and 
$\scaled{\varphi}{f^{(\balpha)}}$.
\end{proof}

\section{The results of Section~\ref{sec-mean-payoff}}

\begin{lemma}
Let $M$ be a $n\times n$ Markov matrix with a unique final class,
and let $\rs\in \X$ belong to this final class.
Denote by $M_{(\rs)}$ the matrix obtained from $M$ by putting to
zero all entries in the $c$-th column.
Then, the vector $\varphi\in\RX$ with coordinates $\varphi_i =\TC_{i\rs}(M)$
satisfies $\varphi=1+M_{(\rs)} \varphi$.
\end{lemma}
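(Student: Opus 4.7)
The plan is to establish the identity by a standard first-step (one-step) analysis of the Markov chain, using the strong Markov property at time $1$.

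First I would observe that because $M$ has a unique final class containing $\rs$, this class is reachable from every state in finitely many steps with positive probability, and $\rs$ is positive recurrent within it. Together with $\X$ being finite, this gives $\TC_{i\rs}(M)<+\infty$ for every $i\in \X$, so $\varphi$ is a well-defined vector in $\RX$.

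Next, fix $i\in \X$ and condition on the value of $X_1$. By definition of $\TC_{i\rs}(M)$, the infimum is over $k\geq 1$, so if $X_1=\rs$ then the hitting time equals $1$, while if $X_1=j\neq \rs$ the strong Markov property at time $1$ gives that the remaining time to reach $\rs$ is distributed as the first hitting time of $\rs$ starting from $j$, whose expectation is $\TC_{j\rs}(M)=\varphi_j$. Summing, and using that $\sum_j M_{ij}=1$ since $M$ is Markov, I would write
\[
\varphi_i \,=\, M_{i\rs}\cdot 1 \,+\, \sum_{j\neq \rs} M_{ij}\bigl(1+\varphi_j\bigr)
\,=\, 1 \,+\, \sum_{j\neq \rs} M_{ij}\varphi_j.
\]

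Finally, I would recognize the right-hand sum as $(M_{(\rs)}\varphi)_i$, since by construction $M_{(\rs)}$ agrees with $M$ outside the $\rs$-th column and has its $\rs$-th column zero, so $(M_{(\rs)}\varphi)_i = \sum_{j\neq \rs} M_{ij}\varphi_j$. This yields $\varphi_i = 1 + (M_{(\rs)}\varphi)_i$ coordinatewise, hence $\varphi=1+M_{(\rs)}\varphi$. The derivation is uniform in $i$: the case $i=\rs$ is covered by the same calculation because $\TC_{\rs\rs}(M)$ is the mean first \emph{return} time (with $k\geq 1$), not $0$. There is no real obstacle in this proof; the only thing to be careful about is the finiteness of $\varphi_i$, which is what the unique-final-class hypothesis supplies.
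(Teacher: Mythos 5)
Your first-step analysis is correct and is exactly the standard argument one would expect here: finiteness of $\varphi_i$ from the unique-final-class hypothesis, conditioning on $X_1$ via the Markov property, and identifying $\sum_{j\neq\rs}M_{ij}\varphi_j$ with $(M_{(\rs)}\varphi)_i$. The paper states this lemma without proof, treating it as a classical fact, so there is nothing to contrast with; your write-up fills the omission correctly, including the observation that the case $i=\rs$ needs no separate treatment because $\TC_{\rs\rs}(M)$ is the mean return time (infimum over $k\geq 1$).
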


\begin{lemma}\label{lemma-equiv}
Let $M$ be a $n\times n$ Markov matrix with a unique final class, and
let $\rs\in \X$ belong to this final class.
Consider a vector $\varphi\in \RX$ with positive coordinates such that
$\varphi\geq 1+M_{(\rs)} \varphi$, and let $K$ be a bound
on its coefficients, $K\geq \|\varphi\|$.
Construct the $n\times n$ matrix $M_{(\rs,\varphi)}$ by
replacing the $c$-th column of $M$ with the vector
$(1/\varphi_\rs) (\varphi-1 -M_{(\rs)} \varphi)$.
Then, $M_{(\rs,\varphi)}$  has nonnegative entries and satisfies
\begin{equation}\label{defmphi}
M_{(\rs,\varphi)}\varphi=\varphi-1\leq \lambda \varphi\enspace,
\end{equation}
with $\lambda=(K-1)/K$.
Moreover, for all $\eta\in\R$ and $v\in \RX$ such that $v_\rs=0$, 
we have
\begin{equation}\label{equiv-mean}
Mv+\eta (\varphi -1)=M_{(\rs,\varphi)} (v+\eta \varphi)
\enspace .
\end{equation}
\end{lemma}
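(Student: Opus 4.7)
My plan is to verify the three claims in order, each by a direct calculation that exploits how $M_{(\rs,\varphi)}$ differs from $M_{(\rs)}$: they agree in every column except the $\rs$-th, and the $\rs$-th column of $M_{(\rs,\varphi)}$ is precisely designed so that $\varphi$ maps to $\varphi-1$.

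First, for nonnegativity, every entry of $M_{(\rs,\varphi)}$ outside column $\rs$ agrees with an entry of $M$, hence is nonnegative. The $\rs$-th column is $(1/\varphi_\rs)(\varphi - 1 - M_{(\rs)}\varphi)$, which is nonnegative because $\varphi_\rs>0$ and the hypothesis $\varphi\ge 1 + M_{(\rs)}\varphi$ holds coordinatewise.

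Next, to establish~\eqref{defmphi}, write $M_{(\rs,\varphi)} = M_{(\rs)} + C$, where $C$ has column $\rs$ equal to $(1/\varphi_\rs)(\varphi - 1 - M_{(\rs)}\varphi)$ and all other columns zero. Then $C\varphi = \varphi_\rs\cdot(1/\varphi_\rs)(\varphi - 1 - M_{(\rs)}\varphi) = \varphi - 1 - M_{(\rs)}\varphi$, so $M_{(\rs,\varphi)}\varphi = M_{(\rs)}\varphi + \varphi - 1 - M_{(\rs)}\varphi = \varphi - 1$. For the inequality $\varphi - 1 \le \lambda \varphi$ with $\lambda=(K-1)/K$, note that coordinatewise this amounts to $\varphi_i \le K$, which is immediate from $K\ge \|\varphi\|$ together with $\varphi_i>0$.

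For the equivalence~\eqref{equiv-mean}, I decompose again as $M_{(\rs,\varphi)} = M_{(\rs)} + C$ and apply to $v + \eta\varphi$. Since $C$ acts on a vector $u$ by $Cu = u_\rs\cdot(1/\varphi_\rs)(\varphi - 1 - M_{(\rs)}\varphi)$, and $(v+\eta\varphi)_\rs = \eta\varphi_\rs$ thanks to $v_\rs=0$, one gets $C(v+\eta\varphi) = \eta(\varphi - 1 - M_{(\rs)}\varphi)$. Hence
\[
M_{(\rs,\varphi)}(v+\eta\varphi) = M_{(\rs)}v + \eta M_{(\rs)}\varphi + \eta(\varphi-1-M_{(\rs)}\varphi) = M_{(\rs)}v + \eta(\varphi - 1).
\]
Finally, $M_{(\rs)}v = Mv$ because $v_\rs = 0$ means zeroing out column $\rs$ of $M$ does not change the product, yielding~\eqref{equiv-mean}.

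Each step is a short algebraic verification rather than a genuine obstacle; the only thing to be careful about is bookkeeping with the column $\rs$ and the hypothesis $v_\rs=0$, which is precisely what makes the substitution $w = \eta + \varphi^{-1} v$ compatible with a linear (rather than affine) map and thus paves the way for interpreting $M_{(\rs,\varphi)}$ as the derivative of $\linear{\varphi}{\cdot}$ applied to $M$.
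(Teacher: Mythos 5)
Your verification is correct: the decomposition $M_{(\rs,\varphi)}=M_{(\rs)}+C$ with $C$ the rank-one matrix supported on column $\rs$, together with the observation that $(v+\eta\varphi)_\rs=\eta\varphi_\rs$ and $M_{(\rs)}v=Mv$ when $v_\rs=0$, yields all three claims exactly as intended. The paper states Lemma~\ref{lemma-equiv} without proof, treating it as a routine computation, and your argument is precisely that computation, so there is nothing to add.
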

\begin{coro}\label{coro-equiv}
Under the conditions of Lemma~\ref{lemma-equiv},
the map $f(v)=Mv$ is such that
$\linear{\varphi}{f}(w)=\varphi^{-1}M_{(\rs,\varphi)} (\varphi w)=M'w$,
for some matrix $M'$ with non negative entries
and row sums less or equal to $\lambda$.
Hence, $\linear{\varphi}{f}$ is order-preserving and contracting with 
contraction factor $\lambda$.
\end{coro}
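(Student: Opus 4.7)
The plan is to unwind the definition of $\linear{\varphi}{f}$ in the $(\eta,v)$ coordinates, invoke identity~\eqref{equiv-mean} from Lemma~\ref{lemma-equiv} to rewrite the result in terms of $M_{(\rs,\varphi)}$, and then recognize a diagonal conjugation that produces the announced matrix $M'$. The two claims about $M'$ (nonnegativity and the $\lambda$-bound on row sums) then translate immediately into order preservation and sup-norm contraction.

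Concretely, I would fix $w\in\RX$ and set $\eta=w_\rs$ together with $v=\varphi(w-w_\rs)$, so that $v_\rs=0$ and $w=\eta+\varphi^{-1}v$, as prescribed by the affine isomorphism $L_\varphi$. Substituting $f(v)=Mv$ into the definition of $\linear{\varphi}{f}$ gives
\[
\linear{\varphi}{f}(w)=\varphi^{-1}\bigl(\eta(\varphi-\un)+Mv\bigr),
\]
and identity~\eqref{equiv-mean} rewrites the parenthesized quantity as $M_{(\rs,\varphi)}(v+\eta\varphi)$. A one-line coordinate check shows $v+\eta\varphi=\varphi w$ (on the $\rs$-th coordinate both sides equal $\eta\varphi_\rs$, and on any other coordinate $\varphi_i(w_i-\eta)+\eta\varphi_i=\varphi_i w_i$). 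This yields
\[
\linear{\varphi}{f}(w)=\varphi^{-1}M_{(\rs,\varphi)}(\varphi w)=M'w,
\]
where $M'$ is the diagonal conjugate of $M_{(\rs,\varphi)}$ with entries $M'_{ij}=\varphi_i^{-1}(M_{(\rs,\varphi)})_{ij}\varphi_j$.

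Next I would verify the two announced properties of $M'$. Nonnegativity is immediate, since $\varphi$ has strictly positive coordinates and $M_{(\rs,\varphi)}$ has nonnegative entries by Lemma~\ref{lemma-equiv}; hence $w\mapsto M'w$ is order preserving. The $i$-th row sum of $M'$ equals $\varphi_i^{-1}(M_{(\rs,\varphi)}\varphi)_i$, and by~\eqref{defmphi} we have $M_{(\rs,\varphi)}\varphi=\varphi-\un$, so the row sum reduces to $1-\varphi_i^{-1}\leq 1-K^{-1}=\lambda$ using $\varphi_i\leq K$. A standard sup-norm estimate for a nonnegative matrix with row sums bounded by $\lambda$ then gives $\|M'w-M'w'\|_\infty\leq\lambda\|w-w'\|_\infty$, i.e.\ the claimed contraction.

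No real obstacle is expected: the corollary is essentially a bookkeeping consequence of Lemma~\ref{lemma-equiv}, the only subtle point being to recognize that the change of variables $(\eta,v)\mapsto w$ converts the affine-in-$(\eta,v)$ expression defining $\linear{\varphi}{f}$ into a purely linear map in $w$, thanks to the algebraic identity $v+\eta\varphi=\varphi w$.
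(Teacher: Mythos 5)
Your proof is correct and follows essentially the same route as the paper: unwind the definition of $\linear{\varphi}{f}$, apply identity~\eqref{equiv-mean} together with the observation $v+\eta\varphi=\varphi w$, and read off the contraction factor from the row sums of $M'$ via~\eqref{defmphi}. The paper's own proof is just a two-line compressed version of this, so your write-up merely supplies the bookkeeping (the change of variables and the explicit bound $1-\varphi_i^{-1}\leq (K-1)/K$) that the paper leaves implicit.
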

\begin{proof} Indeed,
$\linear{\varphi}{f}(w)=\varphi^{-1}M_{(\rs,\varphi)} (\varphi w)$
follows from~\eqref{equiv-mean}.
Since by~\eqref{defmphi}, $M_{(\rs,\varphi)} \varphi \leq \varphi$,
we deduce that $M'\un\leq \lambda \un$ where $\un$ denotes the 
the vector with all coordinates equal to $1$.
\end{proof}

\begin{proof}[Proof of Theorem~\ref{general-chang-mean}]
If $f$ is a polyhedral map such that, 
all matrices $M\in\rgrad{f}$ 
satisfy the conditions of Lemma~\ref{lemma-equiv} (with the same
fixed $\varphi$ and $\rs$),
then by Corollary~\ref{coro-equiv}, all matrices 
$M'\in\rgrad{\linear{\varphi}{f}}$
satisfy the conclusions of Corollary~\ref{coro-equiv}.
This implies by the continuity of $f$ and $\linear{\varphi}{f}$,
that $\linear{\varphi}{f}$ is order-preserving and contracting with 
contraction factor $\lambda$.

Let us show the above property for all maps $f^{(\balpha)}$.
For this, consider the self-map $\bar{f}$ of $\RX$
given by:
\begin{equation}\label{defbarfnew}
 \bar{f}(v):=\max_{M\in\MC} (M_{(\rs)} v )
\enspace.\end{equation}
Note that it coincides with map of~\eqref{defbarf} on the set of
vectors $v$ such that $v_\sx=0$, but we shall apply it to all vectors.
Since all matrices involved in the previous formula have 
a unique final class and that this class contains $\rs$, we get that
they have all a spectral radius strictly less than $1$.
By the same arguments as in the previous section, the map $\bar{f}$ 
has a spectral radius strictly less than one, so is contracting 
for the sup-norm after a scaling by some vector $\psi$
(or equivalently is contracting the 
weighted sup-norm $\|v\|_{\psi}=\|v\psi^{-1}\|$).
In particular the equation $\varphi=1+\bar{f}(\varphi)$ has a unique solution
$\varphi$, 
and since the set of $\MC$ is rectangular,
this equation is the dynamic programming 
equation of an infinite horizon discounted 1-player game problem.
The interpretation of $\varphi$ as the value of this 1-player game problem
gives that $\varphi_{\sx}=\TC_{\sx\rs}$ for all $\sx\in \X$.
Since $\varphi=1+\bar{f}(\varphi)\geq 1+M_{(\rs)} \varphi$ 
for all $M\in  \MC$, and a fortiori
for all $M\in\rgrad{f^{(\balpha)}}$ and $\balpha\in\PA$,
which implies that $M$ satisfies the conditions of Lemma~\ref{lemma-equiv}
with $\varphi$ and $\rs$, we get by the above arguments 
that  the maps $\linear{\varphi}{f^\balpha}$ are
contracting in the sup-norm with contraction factor $\lambda$.
\end{proof}
\begin{proof}[Proof of Proposition~\ref{prop-general-chang-mean}]
By definition of $\linear{\varphi}{f}$, we have
$\eta+v=f(v)$ if and only if $w=\linear{\varphi}{f}(w)$
for $w=\eta+\varphi^{-1} v$.
Moreover, the transformation of maps $f$,
$\linear{\varphi}{f}(w)=\varphi^{-1}(\eta (\varphi-1)+f(v))$,
is preserving the order on the maps $f$.
Hence if $\eta^\ki$, $v^\ki$ and $\balpha^\ki$ are respectively
the sequences  of eigenvalues, eigenvectors, and policies of
Algorithm~\ref{policymeanbase} for $f$ and $f^{(\balpha)}$,
then $w^\ki=\eta^\ki+\varphi^{-1} v^\ki$ and $\balpha^\ki$ are respectively
the sequence  of fixed points and policies of
Algorithm~\ref{policybase} for $\linear{\varphi}{f}$ and 
$\linear{\varphi}{f^{(\balpha)}}$.
\end{proof}

\bibliographystyle{plainurl}

\bibliography{references}

\end{document}